\documentclass[11pt]{article}
\usepackage{latexsym,euscript,amsmath,amssymb,amsbsy,amsfonts,amsthm,amsopn,amstext,amsxtra,amscd}
\usepackage{epsfig}
\usepackage{graphics}
\usepackage{url}
\usepackage{enumerate}
\usepackage[english]{babel}

\theoremstyle{plain}
\newtheorem{theorem}{Theorem}

\newtheorem{corollary}{Corollary}
\newtheorem{proposition}{Proposition}

\theoremstyle{definition}
\newtheorem{example}{Example}

\newtheorem{remark}{Remark}



\newcommand{\N}{{\mathbb N}}

\newcommand{\D}{\text{$\mathcal{D}$}}

\newcommand{\MM}{\mathbb M}
\newcommand{\NN}{\mathbb N}

\newcommand{\A}{\mathbb A}


\title{Uniform distribution with respect to density}

\author{Ligia L. Cristea \thanks{This author is supported by the Austrian Science Fund (FWF),
Project P27050-N26 and by the Austrain-French cooperation project FWF I1136-N26.}
\\Karl-Franzens-Universit\"at Graz\\ Institut f\"ur Mathematik und Wissenschaftliches Rechnen
\\Heinrichstrasse 36, 8010 Graz,\\Austria\\ \tt{strublistea@gmail.com}
\and Milan Pa\v{s}t\'eka \thanks{This author is supported by the grant VEGA 2/0146/14.}\\Trnavska Univerzita, Pedagogick\'a Fakulta, \\ Priemyseln\'a 4
 P. O. BOX 9 , 918 43  Trnava
 \\Slovakia\\ \tt{pasteka@mat.savba.sk}  }



\overfullrule=4pt
\frenchspacing
\begin{document}
\maketitle

\textbf{Keywords:} uniform distribution, density, Riemann integration\\\\
\textbf{AMS Classification: : 11K06, 11B05}
\abstract{The paper deals with a generalisation of uniform distribution. The analogues of Weyl's criterion are derived. }


\section{Introduction}\label{sec:introduction}
The notion of uniformly distributed sequence was for the first time defined and studied by Herman Weyl in 1916 in his paper [W]. Since then the theory of uniform distribution was developed in a lot of directions by numerous authors. For a survey we refer to the monographs [HLA], [K-N], [D-T], [P-S].
Since a sequence is a mapping defined on the set of positive integers, the uniform distribution is based on the asymptotic density of coimages of intervals, as defined bellow.

Let $\N$ be the set of non-negative integers. If for some
 $A \subset \N$ there exists the limit
 $$
 \lim_{n\to \infty} \frac{1}{n} \big|\{ k \le n: k \in A \}\big| := d(A),
 $$
 we say that $A$ has asymptotic density, and the value $d(A)$ is called the {\it asymptotic density} of $A$. Denote by $\D$ the system that consists of the sets having asymptotic density (see [PA]). A sequence  $\{x_n\}$ of elements of the interval $[0,1]$ is \emph{uniformly distributed modulo $1$} if and only if
 for each subinterval $I \subset [0,1]$ the set $\{n; x_n \in I\}$ belongs to $\D$ and its asymptotic density is equal to the length of the interval $I$.

The aim of this paper is to describe uniformity of distribution with respect to a larger class of finitely additive probability measures defined on certain systems of sets.

In Section \ref{sec:preliminaries} we introduce the notions of density and $\pi$-uniformly measurable mapping with respect to a density $\pi$. We give some examples of $\pi$-uniformly measurable mappings defined on different spaces, and we prove a necessary and sufficient condition for the existence of a $\pi$-measurable mapping.

In the third section, dedicated to Riemann integrability and the Weyl criterion, we deal with real valued functions defined on a dense subset $\MM$ of a compact metric space $\Omega$ equipped with a Borel probability measure. We prove a necessary and sufficient condition for Riemann integrability in terms of uniformly distributed sequences and finally derive a result in terms of asymptotic density.

In  Section \ref{sec:set_of_non-negative_integers}  the role of the set $\MM$ from the previous section is played by the set of non-negative integers $\N$. First, we prove results that relate $\pi$-uniform measurability  to uniform distribution modulo $1$. In the sequel, the notions and results are extended from real valued mappings to mappings with values in an arbitrary compact metric space endowed with a  Borel probability measure.


\section{Preliminaries}
\label{sec:preliminaries}

Let $\MM$ be an arbitrary set and $\mathcal{Y}$ a system of subsets of $\MM$. Then $\mathcal{Y}$ is called a $q$-\emph{algebra of sets} if the following three conditions are satisfied:
i) $M \in \mathcal{Y}$, \newline
ii) $A, B \in \mathcal{Y}, A \cap B = \emptyset \Longrightarrow A\cup B \in
\mathcal{Y}$, \newline
iii) $A, B \in \mathcal{Y}, A \subset B  \Longrightarrow B\setminus A \in
\mathcal{Y}$.

It is well known that if in ii) the condition $A \cap B = \emptyset$ is omitted and in iii)
the condition $A \subset B$, then $\mathcal{Y}$ is called an {\it algebra of sets}.

$$ $$
Let $\pi$ be a finitely additive probability measure on the $q$-algebra $\mathcal{Y}$ fulfilling the condition

$A \in \mathcal{Y}$ if and only if for arbitrary $\varepsilon > 0$ there exist $A_1, A_2 \in \mathcal{Y}$
such that
\begin{equation}
\label{def_density}
A_1 \subset A \subset A_2, \pi(A_2) - \pi(A_1) < \varepsilon.
\end{equation}
We call such a finitely additive probability measure {\it density}.

A mapping $x: \MM \to [0,1]$ is called $\pi$-{\it uniformly measurable} if
for each subinterval $I \subset [0,1]$ the set $x^{-1}(I)$ belongs to $\mathcal{Y}$ and
\begin{equation}
\pi\left(x^{-1}(I)\right)=|I|.  \label{def_unif_measurable}
\end{equation}
The property \eqref{def_density} provides that $x$ is $\pi$-uniformly measurable if and only if \eqref{def_unif_measurable} holds for a set of intervals $I$, who's set of endpoints is dense in the unit interval.
\vskip0,5cm
For a better illustration we show some examples.

\begin{example}
Let $\MM =\N$ and $\{c_n\}$ be a sequence of positive real numbers with
$\sum_{n=1}^\infty c_n = \infty$.
 If for $A \subset \N$ there exists the limit
 $$
 \lim_{n\to \infty} \frac{\sum_{ k \le n: k \in A }c_k}{\sum_{k=1}^n c_k} := d_c(A),
 $$
 then we say that $A$ has weight density with respect to the sequence $\{c_n\}$.
 In the case $\mathcal{Y} =\D_c$, the algebra of the sets having corresponding weight density $d_c$ (see [PA]), and $\pi =d_c$,  the $\pi$-uniformly measurable mapping is a sequence uniformly distributed with respect to $d_c$.
\end{example}

\begin{example}
For $\MM =\N$, $\D_u$ the algebra of the sets having uniform density $u$ (see [PA]), and $\pi =u$,  the $\pi$-uniformly measurable mapping is a well distributed sequence.
\end{example}

\begin{example}
Let $\MM = [0,\infty)$. Denote by $\mathcal{Y}$ the family of all Lebes- gue measurable sets $S \subset \MM$ such that there exists the limit
\begin{equation*}
\pi(S) = \lim_{T \to \infty}\frac{\lambda(S \cap [0,T])}{T}.
\end{equation*}
Here the $\pi$-uniformly measurable mapping coincides with continuous uniform distribution
(see [K-N], [D-T], [SP]).
\end{example}

We conclude with an example which is a special case of the object of study in the next section.

Throughout the wohle paper, for any subset $A$ of a topological space, $cl(A)$ will denote the topological closure of $A$, and $Int(A)$ its interior in the given space.

\begin{example}
Consider once more $\MM =\N$ and $\Omega$ the ring of polyadic integers, (see [N], [N1]). Then the set function $\mu^\ast(S) = P(cl(S))$, where $P$ is the Haar measure on $\Omega$, is called Buck's measure density, (see [BUC], [PA]). If $\mathcal{Y}=\D_\mu$ is the algebra of all sets
measurable in the sense of Buck and $\mu$ the restriction of $\mu^\ast$ on $\D_\mu$ we get Buck's uniformly distributed sequences (see [PA], [PA1]). Let us remark that in the paper [P-P]   more general cases of Borel probability measures on the ring of polyadic integers are constructed, and these can induce densities on $\N$ in an analogous way. Similar examples of density are studied in [P-T] in the case when $\MM$ is a Dedekind domain with the finite norm property.
\end{example}

\vskip0,5cm

The definition of a $\pi$-uniformly measurable mapping leads to the question regarding the existence of such a mapping. We shall characterise it by the following property.

We say that $\pi$ has the {\it Darboux property} on $\mathcal{Y}$  if for every
$A \in \mathcal{Y}$ and any non negative $\alpha \le \pi(A)$ there exists $B \subset A$,
$B \in \mathcal{Y}$  with $\pi(B)=\alpha$.

The following result can be proven analogously to Theorem $2.2$ in [PS].


\begin{proposition} The following statements are equivalent:
\label{prop:equivalent_statements_darboux}
\begin{enumerate}
 \item $\pi$ has the  Darboux property on $\mathcal{Y}$.
\item For every $B \in \mathcal{Y}$ there exists $B_1 \subset B$ such that $B_1 \in \mathcal{Y}$ and $\pi(B_1) = \frac{1}{2}\pi(B)$.
\item For arbitrary $\varepsilon>0$ there exist the sets $C_1,\dots,C_k$ such that
$\NN = C_1\cup\dots\cup C_k$ and $\pi(C_j) < \varepsilon$, $j=1,\dots,k$.
\end{enumerate}
\end{proposition}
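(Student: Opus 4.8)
The plan is to prove the cycle of implications $(1)\Rightarrow(2)\Rightarrow(3)\Rightarrow(1)$. The step $(1)\Rightarrow(2)$ is immediate: apply the Darboux property with $A=B$ and $\alpha=\tfrac12\pi(B)\le\pi(B)$. For $(2)\Rightarrow(3)$ I would iterate the halving property over the whole space. Since $\MM\in\mathcal{Y}$ and $\pi(\MM)=1$, statement (2) applied to $\MM$ gives $\MM_1\subset\MM$ in $\mathcal{Y}$ with $\pi(\MM_1)=\tfrac12$, and $\MM\setminus\MM_1\in\mathcal{Y}$ by condition (iii); halving each of the two pieces and continuing, after $n$ steps one obtains a partition of $\MM$ into $2^{n}$ pairwise disjoint members of $\mathcal{Y}$, each of measure $2^{-n}$ (disjointness, exhaustion and membership being preserved at each split, again via (iii)). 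Choosing $n$ with $2^{-n}<\varepsilon$ yields (3).

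The substance is in $(3)\Rightarrow(1)$, which I would run through a ``localised'' form of (3): \emph{for every $D\in\mathcal{Y}$ and every $\eta>0$, $D$ is a finite disjoint union of members of $\mathcal{Y}$ of measure $<\eta$.} Granting this, the value set $\{\pi(G):G\in\mathcal{Y},\ G\subset D\}$ is dense in $[0,\pi(D)]$ for every $D\in\mathcal{Y}$ — partition $D$ into pieces of measure $<\eta$, take a sub-family maximal with respect to inclusion whose union has measure $\le\tau$, and note that its measure then exceeds $\tau-\eta$. In particular, for every $D\in\mathcal{Y}$, every $\tau\in[0,\pi(D)]$ and every $\delta>0$ there is $G\in\mathcal{Y}$ with $G\subset D$ and $\tau-\delta<\pi(G)\le\tau$ (if the approximant overshoots $\tau$, shrink it, applying the same density to $G$ itself).

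To pass from this to the exact Darboux property, let $A\in\mathcal{Y}$ and $0\le\alpha\le\pi(A)$. Starting from $G_0=\emptyset$, $E_0=A$, I would build inductively increasing sets $G_n\in\mathcal{Y}$ and decreasing sets $E_n\in\mathcal{Y}$ with $G_n\subset E_n\subset A$, $\pi(G_n)\le\alpha\le\pi(E_n)$ and $\pi(E_n)-\pi(G_n)\to0$: inside $E_n\setminus G_n\in\mathcal{Y}$, pick $F\in\mathcal{Y}$ of measure just below $\alpha-\pi(G_n)$ and set $G_{n+1}=G_n\cup F$; then pick $F'\in\mathcal{Y}$ inside $E_n\setminus G_{n+1}$ of measure just below $\pi(E_n)-\alpha$ and set $E_{n+1}=E_n\setminus F'$, with condition (iii) keeping everything in $\mathcal{Y}$. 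Put $B=\bigcup_n G_n$. Since the $G_m$ increase and the $E_m$ decrease, every $G_m$ lies in every $E_n$, so $G_n\subset B\subset E_n$ for all $n$; as $\pi(E_n)-\pi(G_n)\to0$, the approximation property \eqref{def_density} forces $B\in\mathcal{Y}$, the squeeze gives $\pi(B)=\lim_n\pi(G_n)=\alpha$, and $B\subset E_0=A$.

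The main obstacle is the localisation step. Since $\mathcal{Y}$ is only a $q$-algebra, when (3) supplies a partition $\MM=C_1\cup\dots\cup C_k$ into small pieces the traces $D\cap C_i$ need not belong to $\mathcal{Y}$, so one cannot simply split $D$ along this partition. The way around it — and also the recurring idea in the last step — is to use the approximation property \eqref{def_density}: trap $D$ between members of $\mathcal{Y}$ that can be split along the $C_i$ inside $\mathcal{Y}$, then absorb the small error set; and, dually, never take an infinite union inside $\mathcal{Y}$ directly, but certify $\bigcup_n G_n\in\mathcal{Y}$ by trapping it between its finite stages and a decreasing chain of $\mathcal{Y}$-supersets of matching measure. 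This is exactly the pattern of the proof of Theorem~2.2 in [PS].
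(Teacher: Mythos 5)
Your cycle $(1)\Rightarrow(2)\Rightarrow(3)$ is correct, and so is your final exhaustion argument: \emph{granted} the approximate Darboux property inside subsets (for every $D\in\mathcal{Y}$, $\tau\in[0,\pi(D)]$ and $\delta>0$ there is $G\in\mathcal{Y}$, $G\subset D$, with $\tau-\delta<\pi(G)\le\tau$), your nested sets $G_n\subset B\subset E_n$ together with \eqref{def_density} do produce $B\subset A$, $B\in\mathcal{Y}$, $\pi(B)=\alpha$. The genuine gap is exactly at the place you call the main obstacle, and the proposed way around it is not a proof. Statement (3) gives only a finite \emph{cover} (not a partition) of $\MM$ by small sets of $\mathcal{Y}$, and a $q$-algebra provides no operation that localises this cover to a given $D\in\mathcal{Y}$: neither $D\cap C_i$ nor $C_i\setminus C_j$ need belong to $\mathcal{Y}$ (already for asymptotic density, two sets of $\D$ typically have an intersection without density). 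Your suggestion to ``trap $D$ between members of $\mathcal{Y}$ that can be split along the $C_i$'' presupposes that $D$ admits arbitrarily good $\mathcal{Y}$-approximations whose traces on the $C_i$ again lie in $\mathcal{Y}$; nothing in conditions (i)--(iii) or in \eqref{def_density} yields such approximations --- \eqref{def_density} certifies membership of a set you can already squeeze between $\mathcal{Y}$-sets, it does not make the traces squeezable. Hence the localised form of (3), which carries all the content of $(3)\Rightarrow(1)$ (it immediately gives the $\varepsilon$-Darboux property and then (1) by your limit argument), is left unproved.

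For comparison: the paper itself gives no proof of this proposition but refers to Theorem 2.2 of [PS], i.e.\ to the setting of Buck's measure density, where one has much more structure than the abstract axioms used here --- an outer set function $\mu^\ast$ defined on \emph{all} subsets of $\N$, subadditive and determined by coverings with finite unions of residue classes, which form genuine partitions of $\N$ and interact in a controlled way with every measurable set. That is precisely the mechanism your trapping step would need and does not have in the abstract $q$-algebra setting. To close the gap you must either prove, from (3) and \eqref{def_density} alone, that every $D\in\mathcal{Y}$ admits finite decompositions into $\mathcal{Y}$-sets of arbitrarily small measure (your localised (3)), or import additional hypotheses of the above kind; as written, the implication $(3)\Rightarrow(1)$ is not established.
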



\begin{theorem}
\label{theo:unif_meas_exists_iff_darboux}
A $\pi$-uniformly measurable mapping exists if and only if $\pi$ has the Darboux property on $\mathcal{Y}$.
\end{theorem}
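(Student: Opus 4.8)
The plan is to prove the equivalence in two directions, using Proposition~\ref{prop:equivalent_statements_darboux} (in particular its third characterisation) as the main engine.

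**Necessity (existence $\Rightarrow$ Darboux).** Suppose $x:\MM\to[0,1]$ is $\pi$-uniformly measurable. Given $\varepsilon>0$, choose $k$ with $1/k<\varepsilon$ and partition $[0,1]$ into the $k$ consecutive intervals $I_j=[(j-1)/k,j/k)$ for $j=1,\dots,k-1$ and $I_k=[(k-1)/k,1]$. Set $C_j=x^{-1}(I_j)$. Since the $I_j$ cover $[0,1]$, the sets $C_j$ cover $\MM$; each $C_j\in\mathcal Y$ by $\pi$-uniform measurability, and $\pi(C_j)=|I_j|\le 1/k<\varepsilon$. By part~(3) of Proposition~\ref{prop:equivalent_statements_darboux}, $\pi$ has the Darboux property on $\mathcal Y$. (One should note that the endpoints of these intervals are among a dense set, so the remark following \eqref{def_unif_measurable} guarantees $\pi(C_j)=|I_j|$ even if $\pi$-uniform measurability was only verified on a dense family of intervals.)

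**Sufficiency (Darboux $\Rightarrow$ existence).** This is the substantive direction. Assuming the Darboux property, I will construct $x$ by successively refining a partition of $\MM$ indexed by dyadic intervals. Using part~(2) of Proposition~\ref{prop:equivalent_statements_darboux}, split $\MM=M$ into $M_0,M_1\in\mathcal Y$ with $\pi(M_0)=\pi(M_1)=\tfrac12$; then split each $M_i$ into halves of measure $\tfrac14$, and inductively, after $n$ steps obtain a partition $\{M_{s}:s\in\{0,1\}^n\}$ of $\MM$ into sets in $\mathcal Y$ each of measure $2^{-n}$, refining the previous one. For $t\in[0,1)$ with binary expansion $0.t_1t_2\dots$, define $x^{-1}([0,t))=\bigcup\{M_s: s<t_1\dots t_{|s|}\text{ in the appropriate lexicographic sense}\}$ — more carefully, for each dyadic rational $j/2^n$ let $x^{-1}\big([0,j/2^n)\big)$ be the union of the $j$ sets $M_s$ corresponding to the first $j$ strings of length $n$ in lexicographic order; consistency across levels follows from the refinement property. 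This prescribes $x$ on all points: $x(m)=\sup\{j/2^n: m\notin x^{-1}([0,j/2^n))\}$, equivalently the unique real whose dyadic-level membership is dictated by which $M_s$ contains $m$. Then for a dyadic interval $[a,b)$ one gets $x^{-1}([a,b))\in\mathcal Y$ with $\pi$-measure $b-a$; since the dyadic rationals are dense in $[0,1]$, the remark after \eqref{def_unif_measurable} shows $x$ is $\pi$-uniformly measurable.

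**The main obstacle** will be handling the endpoints and the passage from dyadic intervals to arbitrary ones cleanly: one must check that the nested construction indeed yields a well-defined map on all of $\MM$ (every $m$ lies in exactly one $M_s$ at each level, and these nest down to a single real), and that the preimage of each dyadic interval lies in $\mathcal Y$ — this uses only conditions (ii) and (iii) of the $q$-algebra axioms since the relevant unions are finite and disjoint and the relevant differences are nested. The density of dyadic endpoints, together with the remark following the definition \eqref{def_unif_measurable}, then upgrades the dyadic-interval identity $\pi(x^{-1}(I))=|I|$ to all subintervals, completing the proof. \Proofbox
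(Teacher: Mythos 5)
Your overall strategy is the same as the paper's: necessity by pulling back a partition of $[0,1]$ into intervals of length less than $\varepsilon$ and invoking part (3) of Proposition \ref{prop:equivalent_statements_darboux}, and sufficiency by using part (2) to build nested decompositions of $\MM$ into sets of measure $2^{-n}$ and sending each point to the real number determined by its nested dyadic address. The necessity half is fine. The gap is in the sufficiency half, precisely at the point you label the main obstacle. Your two descriptions of $x$ are not equivalent: if $x(m)$ is the real determined by the nested address of $m$ (your $\sup$ formula), then $x^{-1}\bigl([0,j/2^n)\bigr)$ is \emph{not} the union of the first $j$ level-$n$ cells. A point whose address has an all-ones tail is sent to the right endpoint of the corresponding dyadic interval; for instance, a point lying for every $m$ in the rightmost level-$m$ cell inside $M_0$ is mapped to $1/2$, so it is missing from $x^{-1}([0,1/2))$ and appears in $x^{-1}([1/2,1))$ although it belongs to $M_0$. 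Hence the preimage of a half-open dyadic interval is a cell corrected by exceptional sets of the form $\bigcap_m M_{s11\cdots}$ (points mapping to a fixed dyadic endpoint). These are countable intersections of cells, not finite disjoint unions or nested differences of sets already known to lie in $\mathcal{Y}$; since $\mathcal{Y}$ is only a $q$-algebra, your claim that membership in $\mathcal{Y}$ follows from conditions (ii) and (iii) alone is not justified, and one cannot instead force the preimages to be exactly the cells, because the intersection of the half-open dyadic intervals along an all-ones address is empty.

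What closes this gap — and what the paper's proof does — is the approximation condition \eqref{def_density}: for any $y\in[0,1]$ the set $x^{-1}(\{y\})$ is contained, for every $n$, in a union of two adjacent cells of total measure $2^{-n+1}$, hence by \eqref{def_density} it lies in $\mathcal{Y}$ with $\pi$-measure $0$; then the sandwich $U(j,2^n)\setminus x^{-1}\bigl(\{\tfrac{j}{2^n},\tfrac{j+1}{2^n}\}\bigr)\subset x^{-1}\bigl(Int(I(j,2^n))\bigr)\subset U(j,2^n)$, again via \eqref{def_density}, gives $x^{-1}\bigl(Int(I(j,2^n))\bigr)\in\mathcal{Y}$ with measure $2^{-n}$, and the density of the dyadic endpoints together with the remark after \eqref{def_unif_measurable} finishes the argument. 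So your construction is the right one, but the $\mathcal{Y}$-membership of the dyadic preimages must be obtained through \eqref{def_density} (null exceptional sets), not from the $q$-algebra axioms alone.
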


\begin{proof}
The condition 2 in Proposition \ref{prop:equivalent_statements_darboux} provides that there exists a system of decompositions of $\MM$
$\{U(j,2^n)$, $j=0,\dots,2^n-1\}, n=1,2,\dots$, such that $\pi(U(j,2^n))=\frac{1}{2^n}$
and $U(j,2^n)= U(2j,2^{n+1})\cup U(2j+1,2^{n+1})$.

In the same manner we can consider a system of closed intervals
$I(j,2^n)$
$=[\frac{j}{2^n},\frac{j+1}{2^n}]$, for $j=0,\dots,2^n-1,~~n=1,2,\dots$. For each $k\in \N$ we have a uniquely determined system $U(j_k^n,2^n), n=1,2,\dots$, such that $k \in U(j_k^n,2^n)$. The corresponding intervals
$I(j_k,2^n)$ form a centered system of closed sets, thus its intersection is nonempty. Let us denote by
$x(k)$ the element of $\cap_{n=1}^\infty I(j_k^n,2^n)$. Then we have a mapping
$x:\MM \to [0,1]$ such that
$$
k \in U(j,2^n) \Longrightarrow x(k) \in I(j,2^n), k \in \MM.
$$
Thus  $U(j,2^n) \subset x^{-1} (I(j,2^n))$.
Clearly, if $x(k) \in Int(I(j,2^n))$ then
$k \in U(j,2^n)$ and thus $x^{-1}(Int(I(j,2^n))) \subset U(j,2^n)$, where $Int$ denotes the interior with respect to the topology induced by the Euclidean metric.

Each $y \in [0,1]$ can be contained in some interval $I(j,2^n)\cup I(j+1,2^n)$. Therefore
$ x^{-1}(\{y\}) \subset U(j,2^n) \cup U(j+1,2^n), n \in \N$. This yields
$\pi(x^{-1}(\{y\}))=0$, and thus
$\pi(x^{-1} (\{\frac{j}{2^n},\frac{j+1}{2^n}\})) =0$. Considering the inclusions
$$
U(j,2^n) \setminus x^{-1}\Big(\{\frac{j}{2^n},\frac{j+1}{2^n}\}\Big) \subset x^{-1}\big(Int(I(j,2^n))\big) \subset U(j,2^n)
$$
we have $x^{-1}\big(Int(I(j,2^n))\big) \in \mathcal{Y}$ and $\pi\left(x^{-1}(Int(I(j,2^n)))\right)=
2^{-n}$. Since the set of endpoints of intervals $I(j,2^n)$ is dense in the unit interval we thus have proven that
$x$ is $\pi$-uniformly measurable.

The existence of a $\pi$-uniformly measurable mapping provides by
Proposition 1 that $\pi$ has the Darboux property on $\mathcal{Y}$.
\end{proof}

\section{Riemann integrability and the Weyl criterion}\label{sec:riemann_integrability_weyl_criterion}
In this section the generalisation of Example 4 will be studied, as announced above. We shall suppose that $\MM$  is a dense subset of a compact metric space $(\Omega, \rho)$ equipped with a Borel probability measure $P$.

A sequence $\{a_n\}$, $a_n \in \Omega$ is called \emph{uniformly distributed in $\Omega$} if
$$
\lim_{N \to \infty} \frac{1}{N} \sum_{n=1}^N f(a_n) = \int f dP,
$$
for every continuous real function $f$ defined on $\Omega$. Equivalently, this means that
for every measurable
$C \subset \Omega$ with $P\big(cl(C)\setminus Int(C)\big) =0$ the set of indices $\A( \{a_n\}, C):=\{n; a_n \in C\}$ has asymptotic density equal to $P(C)$, (see [D-T], [K-N], [S-P]). Here $ cl(C)$ and $ Int(C)$ denote the closure and, respectively, the interior of the set $C$ with respect to the topology on $\Omega$ that is induced by the metric $\rho$.

A measurable set $C \subset \Omega$ with $P(cl(C)\setminus Int(C)) =0$ is called \emph{set of}
$P$-\emph{continuity} (see [D-T], [K-N], [S-P]).

We shall consider the set function
\begin{equation}
\label{pi_ast}
\pi^\ast(S) = P(cl(S)).
\end{equation}

It can be easily checked that $\pi^\ast$ is a strong submeasure on the system of all subsets of $\MM$. This yields that the system of sets $\mathcal{Y}$, consisting of all $A \subset \MM$ with
$\pi^\ast(A)+ \pi^\ast(\MM \setminus A)=1$, is an algebra of sets and the restriction
$\pi = \pi^\ast|_\mathcal{Y}$ is a finitely additive probability measure on this algebra, fulfilling
the condition \eqref{pi_ast}.

An important role will be played by Riemann integrability and the Riemann integral of bounded real valued functions defined on $\MM$.

We rewrite the definition of the Riemann integral from [Bi] and we reprove the criterion from [Bi] for slightly more general conditions. Then we apply this result in the investigation of uniformly distributed sequences.

 Let $C(\Omega)$ be the set of all continuous real functions defined on $\Omega$.
For an arbitrary bounded real valued function $f$ defined on $\MM$ the values
$$
\int^\ast f = \inf \Big\{\int g dP; g \ge f, g \in C(\Omega) \Big\}
$$
will be called the \emph{upper Riemann integral of} $f$ and
$$
\int_\ast f = \sup \Big\{\int g dP; g \le f, g \in C(\Omega) \Big\}
$$
the \emph{lower Riemann integral of} $f$, respectively. The function $f$ will be called \emph{Riemann integrable} if
$$
\int^\ast f = \int_\ast f := \int f,
$$
in this case this value is called the {\it Riemann integral of} $f$.

As usually $B(x, r)$ will be the open ball with center $ x\in \Omega$ and radius $r>0$.
Since every system of disjoint sets of positive measure is countable we can conclude that for $x \in \Omega$ and
positive $r_1 < r_2$ there is $r$, $r_1 < r < r_2$ such that the ball $B(x, r)$ is a set of $P$-continuity.
From the compactness of $\Omega$ we can derive that for each $\alpha$ the set $\Omega$ can be covered
by a finite system of open balls with radius smaller than $\alpha$, and from these balls we can construct
disjoint sets of $P$-continuity $L_1,\dots,L_m$ such that
$$
\Omega= \bigcup_{j=1}^m L_j,
$$
where $diam(L_j) < \alpha$, for $ j=1,\dots,m$. Thus we can construct a system
$\mathcal{L}_n$, $n=1,2,\dots,$ of disjoint finite covers of $\Omega$ that consist of  $P$-continuity sets,
 $\mathcal{L}_n= \{L_1^{n},\dots,L^{n}_{m_n}\}$ with $diam(L_j^{n}) \to 0$ for $n \to \infty$,
uniformly with respect to $j$. These can be arranged in such a manner that every set from
$\mathcal{L}_n$ is a disjoint union of sets from $\mathcal{L}_{n+1}$.

Put $K_j^n =L_j^{n} \cap \MM, j=1,\dots,m_n, n=1, 2, \dots$. Denote by $\mathcal{F}$ the set of all
real functions of the form $f=\sum_{j=1}^{m_n} \beta_j \mathcal{X}_{K_j^n}$, $\beta_j\in \mathbb{R}$. These functions are Riemann integrable and $\int f =\sum_{j=1}^{m_n} \beta_j P(L_j^n)$. By applying Urysohn's Lemma to the sets of $P$-continuity $L_j^n$ we deduce that in the definition of the upper and lower Riemann integral the set
$C(\Omega)$ can be replaced by $\mathcal{F}$.

Thus a bounded function $h$ is Riemann integrable if and only if for arbitrary $\varepsilon >0$ there exist $f_1, f_2 \in \mathcal{F}$ such that $f_1 \le h \le f_2$ and $\int(f_2-f_1) < \varepsilon$.
Clearly, every uniformly continuous real valued function defined on $\MM$ is Riemann integrable.

There exists at least one sequence of elements of $\Omega$ that is uniformly distributed in $\Omega$
(see [H], [D-T], [K-N]). Since $\MM$ is dense in $\Omega$ we can suppose the existence of at least one uniformly distributed sequence
with elements from $\MM$.

Denote by $C(\MM)$ the set of all uniformly continuous real functions defined on this space. This set of functions coincides with the set of all restrictions of continuous real functions defined on $\Omega$ to $\MM$.  Thus a sequence $\{x_n\}$, $x_n \in \MM$, is uniformly distributed if and only if
$$
\lim_{N \to \infty} \frac{1}{N} \sum_{n=1}^N g(x_n) = \int g,
$$
for each $g \in C(\MM)$.

\begin{remark}
Let $R(\MM)$ be the set of all Riemann integrable real functions defined on $\MM$. Clearly,
$C(\MM) \subset R(\MM)$ and $\mathcal{F} \subset R(\MM)$. Moreover,
\begin{enumerate}[(i)]
  \item $R(\MM)$ is a vector space and $\int$ is a non-negative linear functional on this vector space, and
\item $R(\MM)$ is bounded with respect to the supremum metric and $\int$ is continuous with respect to this metric.
\end{enumerate}
\end{remark}

The relationship between uniform distribution and Riemann integrability in the unit interval is described in the paper by  de Bruijn und Post [BP]. Later this result was proven in [Bi] by a different method and extended for compact metric spaces. This method was the one exploited in [TW] for uniform distribution. We apply this procedure in our case.


\begin{theorem}
\label{theo:riemann_integrable_iff_exists_proper_limit}
Let $f$ be a bounded real valued function defined on $\MM$. Then $f$ is Riemann integrable if and only if for every uniformly distributed sequence $\{x_n\}$ , $x_n \in \MM$, there exists
the proper limit
$$
\lim_{N \to \infty} \frac{1}{N} \sum_{n=1}^N f(x_n);
$$
in this case this limit is equal to $\int f$.
\end{theorem}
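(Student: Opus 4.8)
**The plan is to prove both directions by sandwiching $f$ between functions from $\mathcal{F}$, for which the Cesàro averages along a uniformly distributed sequence are explicitly computable.**

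First I would prove the "only if" direction. Suppose $f$ is Riemann integrable and let $\{x_n\}$, $x_n\in\MM$, be uniformly distributed. Fix $\varepsilon>0$ and pick $f_1,f_2\in\mathcal{F}$ with $f_1\le f\le f_2$ and $\int(f_2-f_1)<\varepsilon$, using the characterisation established just before the theorem. Writing $f_i=\sum_{j} \beta_j^{(i)}\mathcal{X}_{K_j^n}$, note that $\frac1N\sum_{n\le N} f_i(x_n)$ is a finite linear combination of the quantities $\frac1N|\{n\le N:\ x_n\in K_j^n\}|$; since each $L_j^n$ is a set of $P$-continuity, uniform distribution of $\{x_n\}$ gives $\frac1N|\{n\le N:\ x_n\in K_j^n\}|\to P(L_j^n)$, hence $\frac1N\sum_{n\le N} f_i(x_n)\to \int f_i$. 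Then from $f_1\le f\le f_2$ I get
$$
\int f_1 \le \liminf_{N\to\infty}\frac1N\sum_{n=1}^N f(x_n)\le \limsup_{N\to\infty}\frac1N\sum_{n=1}^N f(x_n)\le \int f_2,
$$
and since $\int f_1$ and $\int f_2$ are both within $\varepsilon$ of $\int f$, letting $\varepsilon\to0$ forces the limit to exist and equal $\int f$.

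For the "if" direction I would argue by contraposition: assume $f$ is bounded but not Riemann integrable, so $\int_\ast f < \int^\ast f$, and construct a uniformly distributed sequence along which the Cesàro averages fail to converge. Start from one uniformly distributed sequence $\{y_n\}$ in $\MM$ (which exists, as recalled in the text). The idea is to interleave $\{y_n\}$ with itself on two different "time scales" — i.e. build a sequence that on the blocks $(N_{2k}, N_{2k+1}]$ looks like a long stretch of $\{y_n\}$ and on the blocks $(N_{2k+1}, N_{2k+2}]$ looks like another long stretch — choosing the block lengths growing fast enough (e.g. $N_{k+1}$ huge compared to $N_k$) that the rearrangement is still uniformly distributed (the initial segments become negligible), yet the partial averages of $f$ oscillate. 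To make the averages genuinely oscillate one does not want two copies of the same sequence but rather to exploit the gap $\int_\ast f<\int^\ast f$: one picks $g_1,g_2\in C(\MM)$ (or in $\mathcal{F}$) with $g_1\le f$, $g_2\ge f$, $\int g_1$ close to $\int_\ast f$ and $\int g_2$ close to $\int^\ast f$, and arranges the construction so that along one family of blocks $f$ is evaluated where it is close to its lower envelope and along the other where it is close to its upper envelope. I expect this construction to be the main obstacle: one must simultaneously guarantee (a) uniform distribution of the assembled sequence and (b) a persistent gap between $\liminf$ and $\limsup$ of $\frac1N\sum_{n\le N}f(x_n)$, and reconciling these requires a careful choice of block lengths and of which points are placed in which block.

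Finally I would combine the two implications to conclude, and I would remark that the procedure is exactly the one of [Bi]/[TW] adapted to the present setting, the only new ingredient being that $C(\MM)$ consists of the restrictions of $C(\Omega)$ and that the sets $L_j^n$ of $P$-continuity (together with Urysohn's Lemma) let us replace $C(\Omega)$ by $\mathcal{F}$ throughout, so that the combinatorial bookkeeping in the "if" direction can be done with indicator functions of the $K_j^n$.
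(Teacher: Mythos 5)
Your ``only if'' direction is fine and matches the paper, which simply invokes Weyl's criterion (your sandwich by $f_1,f_2\in\mathcal{F}$ with $\int(f_2-f_1)<\varepsilon$ is exactly the standard argument). The problem is the converse: what you give there is a plan, not a proof, and the step you yourself flag as ``the main obstacle'' is precisely the content of the theorem. Choosing $g_1,g_2\in\mathcal{F}$ with $g_1\le f\le g_2$ and $\int g_1,\int g_2$ near $\int_\ast f,\int^\ast f$ does nothing by itself, because when $f$ is not Riemann integrable these integrals cannot be brought together; the whole difficulty is to produce a \emph{uniformly distributed} sequence along which the Ces\`aro averages of $f$ cling to $\int^\ast f$ (and another along which they cling to $\int_\ast f$), and you do not say how the points in your blocks are to be chosen so that both uniform distribution and the persistent gap hold. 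Interleaving a u.d.\ sequence ``with itself on two time scales'', as you start out, manifestly cannot create oscillation, and your pivot to ``evaluate $f$ where it is close to its upper/lower envelope'' is exactly the unconstructed step.

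The paper's missing ingredient is a perturbation-plus-diagonalisation argument. Fix one u.d.\ sequence $\{a_n\}$ in $\MM$ and form the step functions $H_s,h_s\in\mathcal{F}$ given on each cell $K_j^s$ by the supremum, resp.\ infimum, of $f$ there, so that $\int h_s\le \int_\ast f<\int^\ast f\le \int H_s$ for all $s$. Since each $H_s\in\mathcal{F}$, one has $\frac1N\sum_{n\le N}H_s(a_n)\to\int H_s$; choose thresholds $N(1)<N(2)<\cdots$ quantifying this convergence, and for each $n$ pick a fine scale $i(n)>s(n)$ (where $N(s(n))\le n<N(s(n)+1)$) and replace $a_n$ by a point $y_n$ in the \emph{same} cell $K_j^{i(n)}$ at which $f$ comes within $1/n$ of its supremum on that cell. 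Because $\mathrm{diam}(K_j^{i(n)})\to 0$, we have $\rho(y_n,a_n)\to 0$, so $\{y_n\}$ is still uniformly distributed, while the diagonal choice of scales forces $\liminf_N\frac1N\sum_{n\le N}f(y_n)\ge\int^\ast f$; the analogous construction with $h_s$ gives $\{z_n\}$ with $\limsup_N\frac1N\sum_{n\le N}f(z_n)\le\int_\ast f$. Moreover $\rho(y_n,z_n)\to 0$, so \emph{any} mixture of the two is automatically u.d.; the block lengths $M_k$ with $M_k/\sum_{j\le k}M_j\to 1$ are needed only to make the partial averages of $f$ actually oscillate between the two bounds, not (as in your sketch) to salvage uniform distribution. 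Without this perturbation-within-cells construction your contrapositive argument does not get off the ground, so as it stands the hard direction is unproven.
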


\begin{proof}
One implication follows immediately from Weyl's criterion.

Suppose that $f$ is not Riemann integrable.
For $x \in \MM$, define the functions $H_n, h_n$,
for $ n=1,2,\dots$, as follows:
$$
H_n(x)= \sup \{f(y); y, x \in K_j^{n} \},
$$
$$
h_n(x)= \inf \{f(y); y, x \in K_j^{n} \}.
$$
Clearly $H_n, h_n \in \mathcal{F}$ and $h_n \le f \le H_n$, which implies
$$
\int h_n \le \int_\ast f < \int^\ast f \le \int H_n, \text{ for } n=1,2,\dots.
$$

Let $\{a_n\}$ be a fixed uniformly distributed sequence of elements of $\MM$. Then for every positive integer $s$ we have
$$
\lim_{N \to \infty} \frac{1}{N} \sum_{n=1}^N H_s(a_n) = \int H_s.
$$
This yields that for every $s$ there exists $N(s)$ such that
$$
\frac{1}{N} \sum_{n=1}^N H_s(a_n) > \int H_s- \frac{1}{s}, \text{ for } N \ge N(s),
$$
and we can suppose that $N(1) < N(2) < \dots< N(s)< \dots$. For every positive integer $n$ there exists an uniquely determined $s(n)$ such that $N(s(n)) \le n <N\big(s(n)+1\big)$. For any positive integer $n$, let
$$
H(a_n) = \sup \{H_i(a_n); i> s(n) \}.
$$
Then for every $n$ there exists $i(n) > s(n)$ such that $H_{i(n)}(a_n) \ge H(a_n)- \frac{1}{n}$. Suppose that
$a_n \in K_j^{i(n)}$, then we choose $y_n \in K_j^{i(n)}$ in order that
$f(y_n) \ge H_{i(n)}(a_n) - \frac{1}{n}$. From the definition of the sets $K_j^{i(n)}$ we see that
$\lim_{n \to \infty}\rho(y_n,a_n)$ $=0$, thus the sequence $\{y_n\}$ is uniformly distributed.

Now we can consider
$$
\frac{1}{N}\sum_{n=1}^N f(y_n) \ge
\frac{1}{N}\sum_{n=1}^N H_{i(n)}(a_n) -\frac{1}{N}\sum_{n=1}^N \frac{1}{n}.
$$
For $N > N(s(n))$ we have
$$
\frac{1}{N}\sum_{n=1}^N f(y_n) \ge \int H_{i(n)} -\frac{1}{i(n)}-\frac{1}{N}\sum_{n=1}^N \frac{1}{n}.
$$
Since the last two terms tend to 0, we get
$$
\liminf_{N \to \infty} \frac{1}{N}\sum_{n=1}^N f(y_n) \ge \int^\ast f.
$$
In an analogous way, only considering $h_n$ instead of $H_n$, we construct a uniformly distributed sequence
$\{z_n\}$ with
$$
\limsup_{N \to \infty} \frac{1}{N}\sum_{n=1}^N f(z_n)\le \int_\ast f.
$$
Clearly, $\lim_{n \to \infty} \rho(y_n, z_n) = 0$, therefore every sequence
$\{u_n\}$, where $u_n = y_n$ or $u_n = z_n$, is uniformly distributed.

Assume that $\{M_k\}$ is a sequence of positive integers with
$$
\lim_{k \to \infty} \frac{M_k}{\sum_{j=1}^k M_j}=1.
$$
Define the sequence $\{x_n\}$ as $x_n = y_n$ for
$\sum_{j=1}^{2k} M_j \le n < \sum_{j=1}^{2k+1} M_j$, and $x_n = z_n$ for $\sum_{j=1}^{2k+1} M_j \le n < \sum_{j=1}^{2k+2} M_j$. Then $\{x_n\}$ is uniformly distributed, and
$$
\liminf_{N \to \infty} \frac{1}{N}\sum_{n=1}^N f(x_n)\le \int_\ast f,
$$
$$
\limsup_{N \to \infty} \frac{1}{N}\sum_{n=1}^N f(x_n)\ge \int^\ast f.
$$
\end{proof}

Since $diam(K_j^n) \to 0$ uniformly for $n \to \infty$ we get
$$
cl(S) = \bigcap_{n=1}^\infty \bigcup_{S \cap L_j^n \neq \emptyset} cl(K_j^n).
$$
And so the $P$-continuity of $K_j^n$ implies
$\pi^\ast(S) = \int^\ast \mathcal{X}_S$. We get


\begin{corollary}
\label{cor:}
A set $S \subset \MM$ belongs to $\mathcal{Y}$ if and only if
for each sequence $\{x_n\}$ which is uniformly distributed the set $\A(\{x_n\},S)$ belongs to
$\D$ and in this case $d\big(\A(\{x_n\},S)\big)=\pi(S)$.
\end{corollary}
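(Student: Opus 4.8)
The plan is to deduce the corollary from Theorem~\ref{theo:riemann_integrable_iff_exists_proper_limit} applied to the bounded function $f=\mathcal{X}_S$, the characteristic function of $S$. The first observation is pure bookkeeping: for any sequence $\{x_n\}$ with $x_n\in\MM$,
$$
\frac{1}{N}\sum_{n=1}^{N}\mathcal{X}_S(x_n)=\frac{1}{N}\big|\{n\le N;\ x_n\in S\}\big|=\frac{1}{N}\big|\A(\{x_n\},S)\cap\{1,\dots,N\}\big|,
$$
so the proper limit $\lim_{N\to\infty}\frac1N\sum_{n=1}^N\mathcal{X}_S(x_n)$ exists if and only if $\A(\{x_n\},S)\in\D$, and when it exists it equals $d\big(\A(\{x_n\},S)\big)$. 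Consequently, by Theorem~\ref{theo:riemann_integrable_iff_exists_proper_limit}, the condition ``$\A(\{x_n\},S)\in\D$ for every uniformly distributed $\{x_n\}$'' is equivalent to Riemann integrability of $\mathcal{X}_S$, and in that case $d\big(\A(\{x_n\},S)\big)=\int\mathcal{X}_S$ for every uniformly distributed sequence.

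It then remains to identify Riemann integrability of $\mathcal{X}_S$ with $S\in\mathcal{Y}$ and to evaluate $\int\mathcal{X}_S$. Here I would use the identity $\pi^\ast(T)=\int^\ast\mathcal{X}_T$ recorded just before the corollary, applied both to $T=S$ and to $T=\MM\setminus S$. Since $\mathcal{X}_S+\mathcal{X}_{\MM\setminus S}\equiv 1$ on $\MM$ and $\int$ is linear with $\int 1=P(\Omega)=1$, the substitution $g\mapsto 1-g$ in the definition of the upper integral gives $\int_\ast\mathcal{X}_S=1-\int^\ast\mathcal{X}_{\MM\setminus S}=1-\pi^\ast(\MM\setminus S)$, while $\int^\ast\mathcal{X}_S=\pi^\ast(S)$. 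Hence $\mathcal{X}_S$ is Riemann integrable, i.e.\ $\int^\ast\mathcal{X}_S=\int_\ast\mathcal{X}_S$, precisely when $\pi^\ast(S)+\pi^\ast(\MM\setminus S)=1$, which is exactly the defining property of $S\in\mathcal{Y}$; and in that case $\int\mathcal{X}_S=\pi^\ast(S)=\pi(S)$.

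Putting the two steps together proves both the equivalence and the formula for the density. I do not anticipate a real obstacle: the statement is essentially a transcription of Theorem~\ref{theo:riemann_integrable_iff_exists_proper_limit} for indicator functions. The only points needing (routine) care are the identification of the Ces\`aro averages of $\mathcal{X}_S$ with counting densities, and the elementary duality $\int_\ast g=1-\int^\ast(1-g)$ that links the two-sided equality $\int^\ast\mathcal{X}_S=\int_\ast\mathcal{X}_S$ with the additivity condition $\pi^\ast(S)+\pi^\ast(\MM\setminus S)=1$ defining $\mathcal{Y}$; one should also keep in mind that a uniformly distributed sequence with elements in $\MM$ does exist, so the quantifier ``for every uniformly distributed $\{x_n\}$'' is not vacuous.
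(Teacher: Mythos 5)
Your proof is correct and follows essentially the same route as the paper: the corollary is obtained by applying Theorem~\ref{theo:riemann_integrable_iff_exists_proper_limit} to $f=\mathcal{X}_S$, using the identity $\pi^\ast(T)=\int^\ast\mathcal{X}_T$ (stated just before the corollary) together with the duality $\int_\ast\mathcal{X}_S=1-\int^\ast\mathcal{X}_{\MM\setminus S}$ to identify Riemann integrability of $\mathcal{X}_S$ with the defining condition $\pi^\ast(S)+\pi^\ast(\MM\setminus S)=1$ of membership in $\mathcal{Y}$. You in fact supply the bookkeeping and the duality step that the paper leaves implicit, so nothing is missing.
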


%
\section{The set of non-negative integers}\label{sec:set_of_non-negative_integers}

Assume that $\MM = \N$ is the set of non-negative integers.  Since $\N \subset \Omega$ we can consider
sequences of positive integers uniformly distributed in the compact space
$(\Omega, \rho, P)$.

A lot of examples of compact spaces containing the set $\N$ as a dense subset are described in the papers [IPT], [N], [N1], [P-P]. In this case the mappings $x:\N \to [0,1]$ are sequences.

The above corollary yields that a mapping $x:\N \to [0,1]$ is $\pi$-uniformly measurable if and only if for each sequence
of positive integers $\{k_n\}$ that is uniformly distributed in $\Omega$,
$$
\lim_{N \to \infty} \frac{1}{N} \sum_{n=1}^N \mathcal{X}_I(x(k_n)) = |I|,
$$
for any arbitrary subinterval $I \subset [0,1)$. Thus by the standard procedure we can prove:


\begin{theorem}
\label{theo:sequ_univ_meas_iff_integral}
A sequence $\{x(n)\}$ of elements of the unit interval is $\pi$-uniformly measurable if and only if for each sequence of positive integers $\{k_n\}$ thet is uniformly distributed in $\Omega$ we have
$$
\lim_{N \to \infty} \frac{1}{N} \sum_{n=1}^N f(x(k_n)) = \int_0^1 f(t)dt,
$$
for every Riemann integrable, (continuous,)  real function $f$ defined on $[0,1]$.
\end{theorem}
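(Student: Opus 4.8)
The plan is to reduce everything to the indicator-function characterisation recorded just before the theorem, namely that $\{x(n)\}$ is $\pi$-uniformly measurable if and only if $\frac1N\sum_{n=1}^N \mathcal{X}_I(x(k_n)) \to |I|$ for every subinterval $I$ and every sequence $\{k_n\}$, $k_n\in\N$, uniformly distributed in $\Omega$. The step from this to the statement of the theorem is the classical de Bruijn--Post sandwiching argument (cf. [BP], [Bi]), now carried out downstairs in $[0,1]$ with Lebesgue measure; this is exactly the ``standard procedure'' alluded to above. The three assertions to be linked are: $\pi$-uniform measurability $\Rightarrow$ the limit formula for every Riemann integrable $f$ $\Rightarrow$ the limit formula for every continuous $f$ $\Rightarrow$ $\pi$-uniform measurability, the middle implication being trivial since $C([0,1])\subset R([0,1])$.

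For the forward implication I would fix a uniformly distributed $\{k_n\}$ in $\Omega$, put $y_n=x(k_n)$, and start from $\frac1N\sum_{n=1}^N \mathcal{X}_I(y_n)\to|I|$, which holds by the preceding remark for every subinterval $I$. By linearity the same relation, with limit $\int_0^1 s\,\dd t$, holds for every step function $s$ (a finite linear combination of indicators of subintervals). Given a Riemann integrable $f$ on $[0,1]$ and $\varepsilon>0$, the Darboux characterisation of Riemann integrability furnishes step functions $s_1\le f\le s_2$ with $\int_0^1(s_2-s_1)\,\dd t<\varepsilon$; sandwiching $\frac1N\sum f(y_n)$ between $\frac1N\sum s_1(y_n)$ and $\frac1N\sum s_2(y_n)$ and passing to the limit gives
$$
\int_0^1 f\,\dd t-\varepsilon\ \le\ \int_0^1 s_1\,\dd t\ \le\ \liminf_{N\to\infty}\frac1N\sum_{n=1}^N f(y_n)\ \le\ \limsup_{N\to\infty}\frac1N\sum_{n=1}^N f(y_n)\ \le\ \int_0^1 s_2\,\dd t\ \le\ \int_0^1 f\,\dd t+\varepsilon .
$$
Letting $\varepsilon\to0$ yields $\frac1N\sum_{n=1}^N f(x(k_n))\to\int_0^1 f\,\dd t$, in particular for continuous $f$.

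For the converse I would assume the limit formula only for continuous $f$ (the weaker hypothesis) and every uniformly distributed $\{k_n\}$. Fixing such a $\{k_n\}$, a subinterval $I$, and $\varepsilon>0$, I would approximate $\mathcal{X}_I$ from below and above by piecewise-linear continuous ``trapezoids'' $g_1\le\mathcal{X}_I\le g_2$ on $[0,1]$ with $\int_0^1(g_2-g_1)\,\dd t<\varepsilon$, so that $|I|-\varepsilon<\int_0^1 g_1\,\dd t$ and $\int_0^1 g_2\,\dd t<|I|+\varepsilon$; sandwiching $\frac1N\sum\mathcal{X}_I(x(k_n))$ between the two continuous averages and letting $N\to\infty$ gives $|I|-\varepsilon\le\liminf\le\limsup\le|I|+\varepsilon$, hence $\frac1N\sum_{n=1}^N\mathcal{X}_I(x(k_n))\to|I|$. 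Since this holds for every $I$ and every uniformly distributed $\{k_n\}$, the remark preceding the theorem (itself a consequence of Corollary \ref{cor:}) yields that $\{x(n)\}$ is $\pi$-uniformly measurable, and the value of the limit in the forward direction is the asserted $\int_0^1 f\,\dd t$.

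I do not expect a serious obstacle here: the argument is bookkeeping with $\varepsilon$'s. The points deserving a little care are that the sandwiching functions $s_1,s_2$ (resp.\ $g_1,g_2$) depend only on $f$ (resp.\ $I$) and not on $\{k_n\}$, so the sandwich estimates survive the limit in $N$ uniformly over the class of uniformly distributed sequences; and the harmless identifications that the Riemann integral on $[0,1]$ coincides with $\int_0^1\cdot\,\dd t$ and that a one-point preimage under a $\pi$-uniformly measurable map is $\pi$-null, which makes the passage between the half-open intervals $I\subset[0,1)$ of the preceding remark and the closed unit interval of the theorem immaterial.
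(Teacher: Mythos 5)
Your argument is correct and is exactly the ``standard procedure'' the paper invokes: reduce to the indicator-function characterisation supplied by Corollary \ref{cor:} and then run the classical Weyl/de Bruijn--Post sandwiching, approximating a Riemann integrable $f$ by step functions from below and above for one direction, and an interval indicator by continuous trapezoids for the other. Your attention to endpoints (single-point preimages being $\pi$-null) and to the fact that the approximants depend only on $f$ or $I$, not on $\{k_n\}$, covers the only delicate points, so nothing is missing.
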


By the standard procedure we can derive the following:


\begin{proposition}
\label{prop:sequ_unif_measurable_iff_unif_distr_mod1}
 Let $\omega : \Omega \to [0,1]$ be a continuous function. Suppose that $\{n \}$ is uniformly distributed in $\Omega$. Then the sequence
$\{\omega(n)\}$ is $\pi$-uniformly measurable if and only it is uniformly distributed modulo $1$.
\end{proposition}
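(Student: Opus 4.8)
The plan is to reduce both implications to Theorem~\ref{theo:sequ_univ_meas_iff_integral} together with the classical Weyl criterion on $[0,1]$, using the hypothesis that the identity sequence $\{n\}$ is uniformly distributed in $\Omega$ as the bridge between the measure $P$ on $\Omega$ and Lebesgue measure on $[0,1]$. First I would record the elementary observation that if $f:[0,1]\to\R$ is continuous, then $f\circ\omega$ is continuous on $\Omega$, so for every sequence $\{k_n\}$ of positive integers uniformly distributed in $\Omega$,
$$
\lim_{N\to\infty}\frac1N\sum_{n=1}^N f(\omega(k_n)) = \int_\Omega f\circ\omega\,dP .
$$
Applying this to $k_n=n$, which is uniformly distributed in $\Omega$ by assumption, gives
$$
\lim_{N\to\infty}\frac1N\sum_{n=1}^N f(\omega(n)) = \int_\Omega f\circ\omega\,dP
$$
for every continuous $f$ on $[0,1]$.

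For the ``only if'' direction: if $\{\omega(n)\}$ is $\pi$-uniformly measurable, then Theorem~\ref{theo:sequ_univ_meas_iff_integral}, specialised to the one uniformly distributed sequence $k_n=n$, yields $\frac1N\sum_{n=1}^N f(\omega(n))\to\int_0^1 f(t)\,dt$ for every continuous (hence every Riemann integrable) $f$; by Weyl's criterion this is exactly uniform distribution modulo $1$ of $\{\omega(n)\}$.

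For the ``if'' direction: if $\{\omega(n)\}$ is uniformly distributed modulo $1$, then by Weyl's criterion $\frac1N\sum_{n=1}^N f(\omega(n))\to\int_0^1 f(t)\,dt$ for every continuous $f$. Comparing this with the second display above (the case $k_n=n$) forces
$$
\int_\Omega f\circ\omega\,dP = \int_0^1 f(t)\,dt
$$
for every continuous $f$ on $[0,1]$; i.e.\ the image measure of $P$ under $\omega$ coincides with Lebesgue measure on continuous test functions. Feeding this back into the first display, for an \emph{arbitrary} uniformly distributed sequence $\{k_n\}$ in $\Omega$ we get $\frac1N\sum_{n=1}^N f(\omega(k_n))\to\int_0^1 f(t)\,dt$ for every continuous (hence Riemann integrable) $f$, so Theorem~\ref{theo:sequ_univ_meas_iff_integral} gives that $\{\omega(n)\}$ is $\pi$-uniformly measurable.

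The step I expect to be the crux is the ``if'' direction: one must recognise that the assumption that $\{n\}$ \emph{itself} is uniformly distributed in $\Omega$ is precisely what upgrades the a priori weaker averaging statement ``$\frac1N\sum f(\omega(n))\to\int_0^1 f$'' to the genuinely measure-theoretic identity $\int_\Omega f\circ\omega\,dP=\int_0^1 f$, which is what allows one to pass from the single sequence $\{n\}$ to \emph{all} uniformly distributed sequences in $\Omega$ at once. The remaining points are routine: the invocation of Theorem~\ref{theo:sequ_univ_meas_iff_integral}, Weyl's criterion, continuity of $f\circ\omega$, and the standard fact that both criteria may be tested on continuous functions (and the usual density-of-endpoints argument handling half-open versus closed subintervals of $[0,1]$).
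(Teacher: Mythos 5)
Your proof is correct and is essentially the ``standard procedure'' the paper invokes without writing out: both implications reduce to Theorem~\ref{theo:sequ_univ_meas_iff_integral} together with Weyl's criterion on $[0,1]$, with the hypothesis that $\{n\}$ is uniformly distributed in $\Omega$ supplying, in the ``if'' direction, the identity $\int_\Omega f\circ\omega\,dP=\int_0^1 f(t)\,dt$ that upgrades the statement from the single sequence $\{n\}$ to all uniformly distributed sequences $\{k_n\}$. Your identification of that pushforward identity as the crux is exactly the right filling-in of the detail the paper leaves implicit.
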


If we apply the proof of the main result in [TW] we obtain the part $1$ of following assertion. The part $2$ follows immediately from Theorem \ref{theo:existence _unif_measurab_mapping_NM}.

%
%

\begin{proposition}
\label{prop:omega}
  Let $\omega : \Omega \to [0,1]$ be a continuous function such that
$$
\int \omega \circ f = \int_0^1f(t)dt,
$$
for every real continuous function $f$ defined on $[0,1]$. Then
\begin{enumerate}
\item For every
sequence of positive integers $\{k_n \}$ that is uniformly distributed in $\Omega$ the sequence
$\{\omega(k_n)\}$ is uniformly distributed modulo $1$.

\item If $\{n\}$ is uniformly distributed in $\Omega$ then $\{\omega(n)\}$ is $\pi$-uniformly measurable.
\end{enumerate}
\end{proposition}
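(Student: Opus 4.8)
The plan is to treat the two parts separately, leaning on the machinery already developed. For part~1, I would start from the hypothesis $\int \omega\circ f = \int_0^1 f(t)\,dt$ for every continuous $f$ on $[0,1]$, and observe that this says precisely that the pushforward under $\omega$ of the measure $P$ on $\Omega$ equals Lebesgue measure on $[0,1]$; equivalently, $\omega$ carries $P$-continuity sets of $\Omega$ (pulled back from intervals) to intervals of the same length. Concretely, for a subinterval $I\subset[0,1)$ whose endpoints form a set dense in $[0,1]$, the set $C_I := \omega^{-1}(I)\subset\Omega$ is a set of $P$-continuity with $P(C_I)=|I|$: indeed $cl(C_I)\setminus \mathrm{Int}(C_I)\subset \omega^{-1}(\partial I)$, and choosing the (dense) set of endpoints so that $P(\omega^{-1}(\{t\}))=0$ makes $C_I$ a $P$-continuity set by the same ``disjoint sets of positive measure are countable'' argument used for the balls $B(x,r)$ in Section~\ref{sec:riemann_integrability_weyl_criterion}. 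Since $\{k_n\}$ is uniformly distributed in $\Omega$, the definition of uniform distribution (in its $P$-continuity-set form) gives that $\A(\{k_n\},C_I)=\{n:\ k_n\in C_I\}=\{n:\ \omega(k_n)\in I\}$ has asymptotic density $P(C_I)=|I|$. As $I$ ranges over a family of intervals with dense endpoint set, this is exactly Weyl's criterion for uniform distribution modulo~$1$ of $\{\omega(k_n)\}$. The paper flags that this is ``the proof of the main result in [TW]'', so the role of the present write-up is to quote that argument and record how the hypotheses line up; the only genuine care needed is the selection of the dense endpoint set avoiding the (countably many, by the $P$-continuity count) values $t$ with $P(\omega^{-1}(t))>0$.

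For part~2, the claim is that if the sequence $\{n\}$ itself (the identity sequence of non-negative integers, viewed in $\Omega$) is uniformly distributed in $\Omega$, then $\{\omega(n)\}$ is $\pi$-uniformly measurable. Here I would invoke Theorem~\ref{theo:existence _unif_measurab_mapping_NM} as the paper instructs: under the stated hypothesis that $\{n\}$ is uniformly distributed, a $\pi$-uniformly measurable mapping exists, and moreover the construction or characterisation there identifies such mappings with continuous functions $\omega$ satisfying the integral identity $\int \omega\circ f=\int_0^1 f$. Alternatively, and more self-containedly, I would combine part~1 with Proposition~\ref{prop:sequ_unif_measurable_iff_unif_distr_mod1}: applying part~1 with $k_n=n$ shows $\{\omega(n)\}$ is uniformly distributed modulo~$1$, and since $\omega$ is continuous and $\{n\}$ is uniformly distributed in $\Omega$, Proposition~\ref{prop:sequ_unif_measurable_iff_unif_distr_mod1} immediately upgrades ``uniformly distributed modulo~$1$'' to ``$\pi$-uniformly measurable''. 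That two-line deduction is really all part~2 requires once part~1 is in hand.

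The main obstacle is the $P$-continuity verification in part~1: one must ensure that enough intervals $I$ have the property that $\omega^{-1}(\partial I)$ is $P$-null, so that $\omega^{-1}(I)$ is a legitimate $P$-continuity set to which the definition of uniform distribution in $\Omega$ applies. This is not automatic for every $I$ but holds for a set of intervals with dense endpoint set, because the level sets $\omega^{-1}(\{t\})$ of positive $P$-measure are at most countable; the density property~\eqref{def_density} of $\pi$ (and the corresponding remark after \eqref{def_unif_measurable} that it suffices to check \eqref{def_unif_measurable} on intervals with dense endpoint set) then lets us conclude $\pi$-uniform measurability from this restricted family. Everything else---the identification of the hypothesis with ``pushforward of $P$ is Lebesgue measure'', and the bookkeeping with $\A(\{k_n\},\cdot)$---is routine and parallels Corollary~\ref{cor:} and the discussion opening Section~\ref{sec:set_of_non-negative_integers}.
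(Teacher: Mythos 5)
Your proposal is correct, and it is in fact more of a proof than the paper itself supplies: the paper disposes of part 1 by pointing to the proof of the main result of [TW] and of part 2 by citing Theorem \ref{theo:existence _unif_measurab_mapping_NM}, without writing out either argument. For part 1 your pullback argument via $P$-continuity sets is sound (reading the hypothesis, as one must, as $\int f\circ\omega\,dP=\int_0^1 f(t)\,dt$), but note two simplifications. First, the hypothesis says the pushforward $\omega_*P$ is Lebesgue measure, so \emph{every} level set $\omega^{-1}(\{t\})$ is automatically $P$-null and every interval pulls back to a $P$-continuity set of measure $|I|$; your countability argument for avoiding ``bad'' endpoints is harmless but unnecessary. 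Second, the shortest route (and presumably the [TW]-style one) avoids $P$-continuity sets altogether: for continuous $f$ on $[0,1]$ the function $f\circ\omega$ is continuous on $\Omega$, so $\frac1N\sum_{n\le N} f(\omega(k_n))\to\int f\circ\omega\,dP=\int_0^1 f(t)\,dt$, which is exactly the continuous-function criterion for uniform distribution modulo $1$. For part 2, your two-line deduction (apply part 1 with $k_n=n$, then use Proposition \ref{prop:sequ_unif_measurable_iff_unif_distr_mod1} to pass from uniform distribution modulo $1$ to $\pi$-uniform measurability) is legitimate and involves no circularity, since that proposition is established before and independently of this one; it is arguably more transparent than the paper's bare appeal to the existence theorem, which as stated is only an existence result and does not ``characterise'' $\pi$-uniformly measurable mappings as functions $\omega$ satisfying the integral identity --- your gloss on that point is inaccurate, but since you do not rely on it, it does not affect the validity of your argument.
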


Analogously to uniform distribution, the notion of $\pi$-uniform measurability can be easily extended from the unit interval to an arbitrary compact metric space.
Let $\mathcal{M}$ be a compact metric space with Borel probability measure $\tilde{P}$. A mapping $y:\N \to \mathcal{M}$ is called $\pi$-uniformly measurable
if for every set of $\tilde{P}$-continuity $A \subset \mathcal{M}$ we have
$y^{-1}(A) \in \mathcal{Y}$ and $\pi(y^{-1}(A))=\tilde{P}(A)$.

Theorem \ref{theo:riemann_integrable_iff_exists_proper_limit} yields by the standard procedure, using Urysohn's \, Lemma, Weyl's  criterion for this case:

\begin{proposition}
\label{Weyl}
A  mapping $x : \N \to \mathcal{M}$ is $\pi$-uniformly measurable if and only if
$$
\lim_{N \to \infty}\frac{1}{N} \sum_{n=1}^N f(x(k_n))= \int f d\tilde{P}
$$
for every continuous real function defined on $\mathcal{M}$ and every sequence of positive integers $\{k_n\}$ that is uniformly distributed in $\Omega$.
\end{proposition}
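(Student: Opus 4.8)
The plan is to deduce both implications from Theorem \ref{theo:riemann_integrable_iff_exists_proper_limit}, applied with $\MM=\N$ (legitimate since $\N$ is dense in $\Omega$), to the scalar function $g=f\circ x$ on $\N$ for the forward direction and to $g=\mathcal{X}_A\circ x=\mathcal{X}_{x^{-1}(A)}$ for the converse. Throughout I would use the identity $\pi^\ast(S)=\int^\ast\mathcal{X}_S$ established just before Corollary \ref{cor:}, which gives that $\mathcal{X}_S$ is Riemann integrable on $\N$ with $\int\mathcal{X}_S=\pi(S)$ precisely when $S\in\mathcal{Y}$, and the Remark in Section \ref{sec:riemann_integrability_weyl_criterion}, part (i), to handle finite linear combinations.

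For ``$\pi$-uniformly measurable $\Rightarrow$ the limit relation'' I would fix a continuous $f$ on $\mathcal{M}$ and $\varepsilon>0$, and partition $\mathcal{M}$ into finitely many pairwise disjoint sets of $\tilde P$-continuity $A_1,\dots,A_m$ of diameter small enough that $\sup_{A_j}f-\inf_{A_j}f<\varepsilon$ for all $j$ (produced exactly as the covers $\mathcal{L}_n$ in Section \ref{sec:riemann_integrability_weyl_criterion}, from compactness of $\mathcal{M}$ and uniform continuity of $f$). Then $f_1:=\sum_j(\inf_{A_j}f)\mathcal{X}_{A_j}\le f\le\sum_j(\sup_{A_j}f)\mathcal{X}_{A_j}=:f_2$ with $\int(f_2-f_1)\,d\tilde P\le\varepsilon$. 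Since $x$ is $\pi$-uniformly measurable, each $x^{-1}(A_j)\in\mathcal{Y}$ with $\pi(x^{-1}(A_j))=\tilde P(A_j)$, so $f_1\circ x=\sum_j(\inf_{A_j}f)\mathcal{X}_{x^{-1}(A_j)}$ and $f_2\circ x$ are Riemann integrable on $\N$ with $\int(f_i\circ x)=\int f_i\,d\tilde P$, hence within $\varepsilon$ of $\int f\,d\tilde P$. From $f_1\circ x\le g\le f_2\circ x$ and monotonicity of the upper/lower Riemann integrals on $\N$ we get $\int^\ast g-\int_\ast g\le\varepsilon$; letting $\varepsilon\to0$ shows $g$ is Riemann integrable on $\N$ with $\int g=\int f\,d\tilde P$, and Theorem \ref{theo:riemann_integrable_iff_exists_proper_limit} delivers $\lim_N\frac1N\sum_{n\le N}f(x(k_n))=\int f\,d\tilde P$ along every uniformly distributed $\{k_n\}$.

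For the converse I would start from the hypothesis and a fixed set of $\tilde P$-continuity $A\subset\mathcal{M}$. Given $\varepsilon>0$, by inner regularity choose a compact $K\subset Int(A)$ with $\tilde P(K)>\tilde P(A)-\varepsilon$ (using $\tilde P(Int(A))=\tilde P(A)$), by outer regularity an open $V\supset cl(A)$ with $\tilde P(V)<\tilde P(A)+\varepsilon$, and by Urysohn's Lemma continuous $u,v:\mathcal{M}\to[0,1]$ with $u\equiv1$ on $K$, $\mathrm{supp}\,u\subset Int(A)$, $v\equiv1$ on $cl(A)$, $\mathrm{supp}\,v\subset V$; then $u\le\mathcal{X}_A\le v$ and $\int u\,d\tilde P,\int v\,d\tilde P$ both lie within $\varepsilon$ of $\tilde P(A)$. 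Composing with $x$ and applying the hypothesis to $u$ and to $v$ along an arbitrary uniformly distributed $\{k_n\}$, a squeeze shows $\liminf_N$ and $\limsup_N$ of $\frac1N\sum_{n\le N}\mathcal{X}_{x^{-1}(A)}(k_n)$ lie within $\varepsilon$ of $\tilde P(A)$; as $\varepsilon\to0$ the proper limit exists and equals $\tilde P(A)$ for every such sequence. Theorem \ref{theo:riemann_integrable_iff_exists_proper_limit} then makes $\mathcal{X}_{x^{-1}(A)}$ Riemann integrable on $\N$ with integral $\tilde P(A)$, so $\pi^\ast(x^{-1}(A))=\int^\ast\mathcal{X}_{x^{-1}(A)}=\tilde P(A)$ and $\pi^\ast(\N\setminus x^{-1}(A))=\int^\ast(1-\mathcal{X}_{x^{-1}(A)})=1-\int_\ast\mathcal{X}_{x^{-1}(A)}=1-\tilde P(A)$; these sum to $1$, whence $x^{-1}(A)\in\mathcal{Y}$ and $\pi(x^{-1}(A))=\tilde P(A)$, i.e. $x$ is $\pi$-uniformly measurable.

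The step-function partition and the arithmetic with $\int^\ast$ and $\int_\ast$ are routine; the one step I expect to be the real obstacle is the tight two-sided continuous approximation of $\mathcal{X}_A$. It needs both the regularity of the Borel measure $\tilde P$ on the compact metric space $\mathcal{M}$ and, crucially, the assumption that $A$ is a set of $\tilde P$-continuity, i.e. $\tilde P(Int(A))=\tilde P(A)=\tilde P(cl(A))$; without this one cannot close the sandwich and only gets inequalities rather than the equality $\pi(x^{-1}(A))=\tilde P(A)$.
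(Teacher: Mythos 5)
Your argument is correct, and it is essentially the route the paper intends: the paper gives no detailed proof but states that Proposition \ref{Weyl} follows from Theorem \ref{theo:riemann_integrable_iff_exists_proper_limit} ``by the standard procedure, using Urysohn's Lemma,'' which is exactly what you carry out — sandwiching $f\circ x$ by step functions built from disjoint $\tilde P$-continuity sets (so that their compositions with $x$ lie in $R(\N)$ via $\pi^\ast(S)=\int^\ast\mathcal{X}_S$), and conversely squeezing $\mathcal{X}_{x^{-1}(A)}$ between Urysohn functions using regularity of $\tilde P$ and the $\tilde P$-continuity of $A$. Your filled-in details (including the reduction of membership in $\mathcal{Y}$ to Riemann integrability of the indicator) are consistent with the paper's framework and close the gaps the paper leaves implicit.
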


Now we can show the existence property by applying the procedure from [H]. We start by proving the following proposition.

  \begin{proposition}
\label{y_unif_meas_then_x_unif_meas}
 Let $\pi$ be a density on the set of positive integers defined by \eqref{def_density}, having the Darboux property. If $y:\N \to [0,1]$ is a $\pi$-uniformly measurable mapping and $x:\N \to [0,1]$ is a mapping such that $\lim_{n \to \infty}|y(n)-x(n)|=0$, then also  $x$ is a $\pi$-uniformly measurable mapping.
\end{proposition}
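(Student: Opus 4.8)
The plan is to verify the defining identity \eqref{def_unif_measurable} for $x$ on a family of intervals whose endpoint set is dense in $[0,1]$; by the observation recorded right after \eqref{def_unif_measurable} this is enough. So I would fix an open interval $I=(a,b)$ with $0<a<b<1$, and for $\varepsilon>0$ small enough that $(a-\varepsilon,b+\varepsilon)\subset[0,1]$ and $a+\varepsilon<b-\varepsilon$, I would sandwich $x^{-1}(I)$ between the $y$-preimages of the shrunk interval $(a+\varepsilon,b-\varepsilon)$ and the enlarged interval $(a-\varepsilon,b+\varepsilon)$, up to a finite set. Since $|x(n)-y(n)|\to0$, the set $F_\varepsilon:=\{n:|x(n)-y(n)|\ge\varepsilon\}$ is finite, and for $n\notin F_\varepsilon$ we have $y(n)\in(a+\varepsilon,b-\varepsilon)\Rightarrow x(n)\in I$ and $x(n)\in I\Rightarrow y(n)\in(a-\varepsilon,b+\varepsilon)$. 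This yields
$$
y^{-1}\big((a+\varepsilon,b-\varepsilon)\big)\setminus F_\varepsilon\ \subseteq\ x^{-1}(I)\ \subseteq\ y^{-1}\big((a-\varepsilon,b+\varepsilon)\big)\cup F_\varepsilon .
$$

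The first real step is to establish that every finite subset of $\N$ belongs to $\mathcal{Y}$ and has $\pi$-measure $0$ — this is where the Darboux hypothesis is used. Indeed, by Proposition~\ref{prop:equivalent_statements_darboux}(3) there are, for each $\varepsilon>0$, sets $C_1,\dots,C_k\in\mathcal{Y}$ with $\N=C_1\cup\dots\cup C_k$ and $\pi(C_j)<\varepsilon$; a singleton $\{m\}$ lies in some $C_j$, so \eqref{def_density} applied with $A_1=\emptyset$ and $A_2=C_j$ gives $\{m\}\in\mathcal{Y}$, and then monotonicity and finite additivity of $\pi$ force $\pi(\{m\})<\varepsilon$ for every $\varepsilon$, i.e.\ $\pi(\{m\})=0$. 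A finite set is a finite disjoint union of singletons, hence lies in $\mathcal{Y}$ and is $\pi$-null. The same $q$-algebra bookkeeping (using only the three axioms, plus the fact that any subset of a finite set is finite) shows that for $A\in\mathcal{Y}$ and $F$ finite, both $A\cup F$ and $A\setminus F$ belong to $\mathcal{Y}$ with $\pi(A\cup F)=\pi(A\setminus F)=\pi(A)$.

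The remainder is routine. Because $y$ is $\pi$-uniformly measurable, the two sets $A_1:=y^{-1}((a+\varepsilon,b-\varepsilon))\setminus F_\varepsilon$ and $A_2:=y^{-1}((a-\varepsilon,b+\varepsilon))\cup F_\varepsilon$ lie in $\mathcal{Y}$ with $\pi(A_1)=b-a-2\varepsilon$ and $\pi(A_2)=b-a+2\varepsilon$. From $A_1\subseteq x^{-1}(I)\subseteq A_2$ and $\pi(A_2)-\pi(A_1)=4\varepsilon$, condition \eqref{def_density} gives $x^{-1}(I)\in\mathcal{Y}$, and squeezing $\pi(x^{-1}(I))$ between $\pi(A_1)$ and $\pi(A_2)$ while letting $\varepsilon\to0$ gives $\pi(x^{-1}(I))=b-a=|I|$. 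Letting $(a,b)$ range over all pairs $0<a<b<1$ produces a family of intervals with dense endpoint set, so $x$ is $\pi$-uniformly measurable.

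I expect the only delicate point to be the first step: in a $q$-algebra one cannot form arbitrary complements or intersections, so one must check by hand that finitely many exceptional indices can be inserted into or deleted from members of $\mathcal{Y}$ while staying inside $\mathcal{Y}$, and — the crux — that such finite sets are $\pi$-null. The latter is not a consequence of the density axioms alone and is exactly what the Darboux assumption supplies through Proposition~\ref{prop:equivalent_statements_darboux}(3). (In the concrete situation of Section~\ref{sec:set_of_non-negative_integers} one could instead invoke the Weyl-type criterion of Theorem~\ref{theo:sequ_univ_meas_iff_integral}, using uniform continuity of the test function together with the fact that $\{n:k_n\in F_\varepsilon\}$ has asymptotic density $0$; but the direct argument above makes no reference to the ambient compactification $\Omega$.)
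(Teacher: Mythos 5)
Your proof is correct, but it follows a genuinely different route from the paper's. The paper argues through the Weyl-type criterion of Theorem~\ref{theo:sequ_univ_meas_iff_integral}: it notes that the Darboux property makes finite sets $\pi$-null, so (via the corollary at the end of Section~\ref{sec:riemann_integrability_weyl_criterion}) the indices $n$ with $k_n$ in a fixed finite set have asymptotic density zero along any sequence $\{k_n\}$ uniformly distributed in $\Omega$; then uniform continuity of a continuous test function $f$ together with $|x(n)-y(n)|\to 0$ gives $\limsup_N \frac1N\sum_{n\le N}|f(x(k_n))-f(y(k_n))|\le\varepsilon$, and the conclusion follows from the criterion -- exactly the alternative you sketch in your final parenthesis. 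You instead verify the definition directly: you sandwich $x^{-1}\bigl((a,b)\bigr)$ between $y$-preimages of shrunk and enlarged intervals up to the finite exceptional set $F_\varepsilon$, prove from Proposition~\ref{prop:equivalent_statements_darboux}(3) and condition \eqref{def_density} that finite sets lie in $\mathcal{Y}$ and are $\pi$-null (and that members of $\mathcal{Y}$ may be modified by finite sets without leaving $\mathcal{Y}$ or changing their measure), and then invoke \eqref{def_density} and the dense-endpoint remark after \eqref{def_unif_measurable}. What your approach buys is independence from the ambient compactification: it uses only the density axioms plus the Darboux property, so it proves the proposition in the generality in which it is stated, and it makes explicit the $q$-algebra bookkeeping (finite modifications, measurability of finite sets) that the paper leaves implicit in the assertion ``$\pi(F)=0$ for each finite set.'' The paper's route is shorter given the machinery of Sections 3--4 already in place, but is tied to the setting where $\pi$ arises from $(\Omega,\rho,P)$.
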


\begin{proof}
The Darboux property of $\pi$ implies that $\pi(F)=0$ for each finite set $F\subset \N$. Thus for every sequence of positive integers
$\{k_n\}$ we have $\lim_{N \to \infty} \sum_{n \le N, k_n \in F} 1 =0$. If $f$ is a continuous real function defined on $[0,1]$ it is also uniformly continuous.
Thus for arbitrary $\varepsilon$ there exists $n_0$ such that for $n>n_0$ we have $|f(x(n))-f(y(n))| < \varepsilon$. This yields that for each sequence of positive integers $\{k_n\}$ that is uniformly distributed in $\Omega$ we have
$$
\limsup_{N \to \infty}\frac{1}{N}\sum_{n=1}^N  |f(x(k_n))-f(y(k_n))| \le \varepsilon,
$$
and the assertion follows.
\end{proof}


\begin{theorem}
\label{theo:existence _unif_measurab_mapping_NM}
Assume that $\pi$ is a density on $\N$ defined by \eqref{def_density}. If $\pi$ has the Darboux property on $\mathcal{Y}$ then for every compact metric space $\mathcal{M}$ with Borel probability measure
$\tilde{P}$ there exists a $\pi$-uniformly measurable mapping $x:\N \to \mathcal{M}$.
\end{theorem}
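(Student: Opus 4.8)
The plan is to carry over two constructions already in the paper---that of a $\pi$-uniformly measurable map into $[0,1]$ (Theorem~\ref{theo:unif_meas_exists_iff_darboux}) and the procedure of [H] for producing sequences uniformly distributed in a compact metric space with a Borel probability measure---letting the Darboux property of $\pi$ play on $\N$ the role that asymptotic density plays in the classical argument. First I would apply to $(\mathcal{M},\tilde P)$ the construction of Section~\ref{sec:riemann_integrability_weyl_criterion} (which uses only compactness and a Borel probability measure): it yields finite partitions $\mathcal{L}_n=\{L^n_1,\dots,L^n_{m_n}\}$, $n=1,2,\dots$, of $\mathcal{M}$ into sets of $\tilde P$-continuity with $diam(L^n_j)\to 0$ uniformly in $j$, each member of $\mathcal{L}_n$ being a disjoint union of members of $\mathcal{L}_{n+1}$. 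Next, using the Darboux property in the form ``for $A\in\mathcal{Y}$ and $0\le\alpha\le\pi(A)$ there is $B\subseteq A$, $B\in\mathcal{Y}$, with $\pi(B)=\alpha$'', I would build by induction on $n$ partitions $\{W^n_1,\dots,W^n_{m_n}\}$ of $\N$ into members of $\mathcal{Y}$ with $\pi(W^n_j)=\tilde P(L^n_j)$ and with $W^{n+1}_i\subseteq W^n_j$ whenever $L^{n+1}_i\subseteq L^n_j$; the inductive step repeatedly applies the Darboux property inside each $W^n_j$ to split it into $\mathcal{Y}$-subsets whose prescribed $\pi$-values are the $\tilde P(L^{n+1}_i)$ with $L^{n+1}_i\subseteq L^n_j$ (these sum to $\pi(W^n_j)$).

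The map $x:\N\to\mathcal{M}$ is then defined as in the proof of Theorem~\ref{theo:unif_meas_exists_iff_darboux}: for $k\in\N$ let $j(k,n)$ be the index with $k\in W^n_{j(k,n)}$; the sets $cl(L^n_{j(k,n)})$ form a decreasing sequence of nonempty compact sets with diameters tending to $0$, and $x(k)$ is the unique point of their intersection. As there, one checks $W^n_j\subseteq x^{-1}(cl(L^n_j))$ and $x^{-1}(Int(L^n_j))\subseteq W^n_j$ (the latter since a point of $Int(L^n_j)$ lies in no $cl(L^n_i)$ with $i\ne j$), whence $x^{-1}(cl(L^n_j))=W^n_j\cup x^{-1}(\partial L^n_j)$ with $\partial L^n_j=cl(L^n_j)\setminus Int(L^n_j)$.

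The decisive step is to show $\pi^\ast\!\big(x^{-1}(\partial L^n_j)\big)=0$. For $m>n$ put $V_m=\bigcup\{W^m_i:\ cl(L^m_i)\cap\partial L^n_j\ne\emptyset\}$; since $x(k)\in\partial L^n_j$ forces $x(k)\in cl(L^m_{j(k,m)})$ and hence $k\in V_m$, one gets $x^{-1}(\partial L^n_j)\subseteq V_m$ and therefore $\pi^\ast(x^{-1}(\partial L^n_j))\le\pi(V_m)=\tilde P\big(\bigcup\{L^m_i:\ cl(L^m_i)\cap\partial L^n_j\ne\emptyset\}\big)$. By nestedness these unions decrease in $m$, and since $diam(L^m_i)\to 0$ their intersection lies in $\partial L^n_j$, which is $\tilde P$-null because $L^n_j$ is a set of $\tilde P$-continuity; continuity of $\tilde P$ from above then drives the right-hand side to $0$. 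Hence $x^{-1}(\partial L^n_j)$ is a $\pi$-null member of $\mathcal{Y}$, and combined with the identity above and the squeeze $x^{-1}(Int(L^n_j))\subseteq x^{-1}(L^n_j)\subseteq x^{-1}(cl(L^n_j))$ this gives $x^{-1}(L^n_j)\in\mathcal{Y}$ and $\pi(x^{-1}(L^n_j))=\tilde P(L^n_j)$.

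It remains to pass to an arbitrary set of $\tilde P$-continuity $A\subseteq\mathcal{M}$. Approximating $x^{-1}(A)$ from inside by the union of the $x^{-1}(L^n_j)$ with $L^n_j\subseteq A$ and from outside by the union of the $x^{-1}(L^n_j)$ with $L^n_j\cap A\ne\emptyset$, and arguing as in the previous step (the inner unions of pieces increase to a set containing $Int(A)$, the outer unions decrease to a set contained in $cl(A)$, and $\tilde P(Int(A))=\tilde P(cl(A))=\tilde P(A)$), one sees that the $\pi$-values of these two members of $\mathcal{Y}$ both tend to $\tilde P(A)$; the characterising property~\eqref{def_density} of the density $\pi$ then yields $x^{-1}(A)\in\mathcal{Y}$ and $\pi(x^{-1}(A))=\tilde P(A)$, i.e.\ $x$ is $\pi$-uniformly measurable. (Alternatively one could finish via the Weyl criterion of Proposition~\ref{Weyl}, approximating a continuous $g$ on $\mathcal{M}$ uniformly by $\sum_j g(p_j)\mathcal{X}_{L^n_j}$ and applying Corollary~\ref{cor:} to each $x^{-1}(L^n_j)$.) The main obstacle is exactly the boundary estimate $\pi^\ast(x^{-1}(\partial L^n_j))=0$: as $\N$ carries no ambient topology, the smallness of the diameters and the $\tilde P$-continuity of the pieces have to be combined purely through the submeasure $\pi^\ast$ and continuity from above of $\tilde P$; everything else is the dyadic-type bookkeeping of Theorem~\ref{theo:unif_meas_exists_iff_darboux} together with the Darboux property.
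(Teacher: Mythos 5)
Your argument is correct, but it is genuinely different from the paper's. The paper does not build the nested $\mathcal{Y}$-partitions of $\N$ with $\pi$-values $\tilde P(L^n_j)$ directly; instead it factors through the unit interval: it takes the covers $\mathcal{S}_m$ of $\mathcal{M}$, attaches to them consecutive intervals $I^j_m\subset[0,1]$ of lengths $\tilde P(S^j_m)$, starts from a $\pi$-uniformly measurable map $x:\N\to[0,1]$ supplied by Theorem~\ref{theo:unif_meas_exists_iff_darboux} (adjusted so that its values avoid the countably many interval endpoints), composes it with the quantile-type correspondence $\alpha\mapsto D(\alpha)=\bigcap_m S^{l(m,\alpha)}_m$, and then verifies uniform measurability through the machinery of Section~\ref{sec:set_of_non-negative_integers}: upper and lower distribution values $P^\ast,P_\ast$ along sequences $\{k_n\}$ uniformly distributed in $\Omega$, an inner/outer regularity argument for closed and open sets, and the Weyl-type criterion (Proposition~\ref{Weyl}). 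You instead generalise the dyadic splitting in the proof of Theorem~\ref{theo:unif_meas_exists_iff_darboux}, using the Darboux property to prescribe the values $\tilde P(L^n_j)$ on nested $\mathcal{Y}$-partitions $W^n_j$ of $\N$, define the map by shrinking intersections of closures in $\mathcal{M}$, and check the definition of $\pi$-uniform measurability set by set: your boundary estimate (covering $x^{-1}(\partial L^n_j)$ by the $V_m\in\mathcal{Y}$ with $\pi(V_m)\to 0$ via nestedness and continuity from above of $\tilde P$) together with the squeeze property \eqref{def_density} replaces the paper's appeal to uniformly distributed sequences entirely; the only notational slip is writing $\pi^\ast$ where the abstract setting only provides the sandwich $\emptyset\subset x^{-1}(\partial L^n_j)\subset V_m$, but that is exactly what \eqref{def_density} needs. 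What each approach buys: the paper's route recycles its existing results (Theorem~\ref{theo:unif_meas_exists_iff_darboux}, the endpoint-avoidance remark, Proposition~\ref{Weyl}) but is tied to the Section~\ref{sec:set_of_non-negative_integers} setting where $\pi$ is induced from the compactification $\Omega$; your route is self-contained, never leaves $\N$ and $\mathcal{M}$, works for an arbitrary density with the Darboux property, and yields the conclusion directly on all sets of $\tilde P$-continuity without the detour through $[0,1]$ and u.d.\ sequences in $\Omega$.
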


\begin{proof}
The compactness of $\mathcal{M}$ provides that there exists a system
$\mathcal{S}_m = \{S_m^j, j=1,\dots, r(m)\}$, $S_m^j \neq \emptyset$, $m=1,2,\dots$,
 of finite closed covers of $\mathcal{M}$, having the following properties:

i) If $n(\mathcal{S}_m)=\max \{diam(S_m^j), j=1,\dots, r(m)\}, m=1,2,\dots$, then  $$\lim_{m \to \infty}n(\mathcal{S}_m)=0.$$

ii) For every $m$ and $i \neq j$,  $\tilde{P}(S_m^i \cap S_m^j)=0$.

iii) If $m \le n$ then for every $i\le r(n)$ there exists a uniquely determined $j=j(i)\le r(m)$ such that
$S_n^i \subset S_m^j$.

iv) The indices in iii) are ordered in such a way that
$$i_1 < i_2 \Rightarrow j(i_1) \le j(i_2).$$

The property i) provides that

v) every real valued continuous function defined on $\mathcal{M}$ can be uniformly approximated by the linear combination of the functions $\mathcal{X}_{S_m^j}, j=1,\dots,r(m)$, for suitable $m$.

Moreover, for every $x \in \mathcal{M}$ and any open ball $B$ containing $x$ there exists a suitable $S_m^j$ such that
$x \in S_m^j \subset B$. Thus

vi) every open set is a union of a countable system of sets $S_m^j$.

To each cover $\mathcal{S}_m$ we can associate the finite system of intervals $I^j_m$,  for $j=1,\dots,r(m)$,
$I^j_m = \left[\sum_{l=1}^{j-1} \tilde{P}(S_m^l), \sum_{l=1}^{j} \tilde{P}(S_m^l)\right]$, and thus $|I^j_m| = \tilde{P}(S_m^l)$, and
 the endpoints of the intervals $I_m^{j}$, $j=1,\dots,r(m)$, form a division of the interval $[0,1]$.

Let $x:\N \to [0,1]$ be a $\pi$-uniformly measurable mapping, its existence being provided by Theorem 1.
The complement of the set of endpoints of the considered intervals is dense in the unit interval. Proposition 5 guarantees that for each dense subset of the unit interval a $\pi$-uniformly measurable mapping can be constructed such that values of this mapping belong to this set. Thus we can assume that the endpoints of $I^k_m,k=1,\dots,r(m)$, do not belong to the set $x(M)$.

 Each number $\alpha \in
[0,1]$ which does not coincide with the endpoints of the intervals $I^k_m,k=1,\dots,r(m)$, belongs to exactly one interval $I^{l(m)}_m$, for $m=1,2,\dots$, where $l(m):=l(m, \alpha)$ depends on $\alpha$.
Since for every $m=1,2,\dots$ we have $S^{l(m+1)}_{m+1} \subset S^{l(m)}_m$, the compactness of $\mathcal{M}$ guarantees that the set $D(\alpha) = \cap_{m=1}^\infty S^{l(m)}_m$ is not empty.
 Consider now a mapping $y:\N \to \mathcal{M}$ such that
$y(n) \in D(x(n)), n = 1,2,\dots$. We prove that this is a $\pi$-uniformly measurable mapping. Let
$\{k_n\}$ be an arbitrary sequence of positive integers that is uniformly distributed in $\Omega$. Denote, for
$H \subset \mathcal{M}$,
$$
P^\ast (H) = \limsup_{N \to \infty}\sum_{n=1}^{N}\mathcal{X}_H(x(k_n)),
$$
and
$$
P_\ast (H) = \liminf_{N \to \infty}\sum_{n=1}^{N}\mathcal{X}_H(x(k_n)).
$$
Let $\mathcal{Z}$ denote the system of all sets of the form $\cup_{t=1}^{T} S_m^{i_t}$. Clearly,
$$
\mathcal{X}_{\cup_{t=1}^{T} S_m^{i_t}}(y(n)) \ge \mathcal{X}_{\cup_{t=1}^{T} I_m^{i_t}}(x(n)).
$$
Thus, using the fact that the sequence $\{ x(k_n)\}$ is uniformly distributed in $[0,1]$, we obtain that
for every $S \in \mathcal{Z}$ we have $P_\ast (S) \ge \tilde{P}(S)$.

On the other hand, vi) implies that for every open set $G$ we have $\tilde{P}(G) = \sup\{\tilde{P}(S); S \subset G, S \in \mathcal{Z} \}$. Thus for every closed set $F$ and every $\varepsilon > 0$ there exists a set $S' \in \mathcal{Z}$ such that $S' \subset \mathcal{M} \setminus  F$ and $P(S') > \tilde{P}(\mathcal{M} \setminus  F) - \varepsilon$. Herefrom we obtain
$$
P^\ast (F) \le 1-P_\ast(S') \le 1-\tilde{P}(S') < 1 -\tilde{P}(\mathcal{M} \setminus  F) +\varepsilon =
\tilde{P}(F) +\varepsilon.
$$
Since $\varepsilon$ is arbitrary, we obtain $P^\ast (F) \le \tilde{P}(F)$. Therefore,
for every $S \in \mathcal{Z}$, $P_\ast(S)=P^\ast(S)=\tilde{P}(S)$ holds and the assertion follows from v).
\end{proof}

In the following we give examples that illustrate the situation in Theorem \ref{theo:existence _unif_measurab_mapping_NM}.
\begin{example}
With the notations in Theorem \ref{theo:existence _unif_measurab_mapping_NM}, let $\mathcal{M}=[0,1]$, and $\pi$ be a density on $\N$ that has the Darboux property. We consider $[0,1]$ to be endowed with the measure $\tilde{P}=\mu_r$, where $\mu_r$ is the binary measure of parameter $r$ on the unit interval, defined below. Thus in this case the equation \eqref{def_unif_measurable} that defines $\pi$-uniform measurability becomes $\pi\left(x^{-1}(I)\right)=\mu_r(I).$

Consider an arbitrarily fixed real number $r\in (0,1)$,  and  let $I_{0,0}=[0,1]$, $I_{n,j}=\left[\left.\frac{j}{2^n},\frac{j+1}{2^n} \right) \right.$, for $j=1,2,\dots 2^n-2$, and $I_{n,2^n-1}=\left[\frac{2^n-1}{2^n},1\right]. $ Then the binomial measure of parameter $r$ on $[0,1]$ is a probability measure that is defined by the conditions 
$\mu_r(I_{n+1,2j})=r\mu_r(I_{n,j})$ and $\mu_r(I_{n+1,2j+1})=(1-r)\mu_r(I_{n,j}),$
for $n=0,1,\dots$, and $j=0,1,\dots,2^n-1.$
For more details regarding the binomial distribution we refer, e.g., to \cite{okadasekiguchishiota_binomial, cristeaprodinger_jmaa}.

It is easy to see that $\mu_r(I_{j,2^n})=r^{n-k}(1-r)^k$, where $k$ is the number of digits $1$ in the binary digital expansion of the number $j$. Moreover, $\mu_r \left(\{y\}\right)=0$, for all $y\in [0,1]$.

Now we follow the steps and ideas from the proof of Theorem \ref{theo:unif_meas_exists_iff_darboux}. Under the assumption that $\pi$ has the Darboux property, we have that for each $B\in \mathcal{Y}$ there exists a set $B_1\in \mathcal{Y}$, $B_1 \subset B$ such that $\pi (B_1)=r\cdot \pi(B).$
We proceed as in the mentioned proof and obtain, for $n=1,2,\dots$, the decomposition consisting of the sets $U(j,2^n)$,  for $j=0,1,\dots, 2^n-1$, with $\pi\left( U(j,2^n)\right)=r^{n-k}(1-r)^k$, where where $k$ is the number of digits $1$ in the binary digital expansion of the number $j$. 
\end{example}

\noindent {\bf Remark.} The construction in Example 5 can also be extended analogously, following the above ideas, to the case when $\tilde P = \mu_{q,\mathbf{r}}$, where $\mu_{q,\mathbf{r}}$ is the multinomial measure of parameter vector $\mathbf{r}=(r_0,r_1,\dots,r_{q-1})$, $0<r_i<1$, $\sum_{i=1}^{q-1}r_i=1$, where the role the of the numeration base $2$ from Example 5 is here played by the positive integer  $q\ge 2$. 
For more details regarding this measure we refer, e.g., to \cite{okadasekiguchishiota_multinomial, cristeaprodinger_jmaa}.
Moreover, passing from the $n$-th decomposition of $\N$ to the $(n+1)$-th decomposition is given by the relation $\displaystyle U(j,q^n)=\cup_{k=0}^{q-1} U(qj+k,q^{n+1})$, for  $j=0,1,\dots, q^n-1$, where $n=1,2\dots$.

\begin{example}
With the notations in Theorem \ref{theo:existence _unif_measurab_mapping_NM}, let $\mathcal{M}$ be the well-known ``two thirds'' Cantor set $\mathcal{C}$, and $\pi$ be a density on $\N$ that has the Darboux property. We consider $\mathcal{C}$ to be endowed with the measure $\tilde{P}=\mu_{\mathcal{C}}$, where $\mu_{\mathcal{C}}$ is a probability measure on the Cantor set, defined inductively as follows.
Let us start with the set $C_0=[0,1]$ and let $\mu_{\mathcal{C}}(C_0)=1$. Let $C_1=[0,\frac{1}{3}]\cup[\frac{2}{3},1]$ and define $\mu_{\mathcal{C}}\left( [0,\frac{1}{3}]\right)=$$\mu_{\mathcal{C}}\left( [\frac{2}{3},0]\right)$$=\frac{1}{2}$. We proceed inductively and obtain at step $k\ge 1$ the set $C_k$ as the union of $2^k$ closed intervals of length $3^{-k}$ and each of them having the measure $\mu_{\mathcal{C}}$ equal to $2^{-k}$. Thus we obtain, in the limit, a measure whose support is the Cantor set. 

This measure coincides with the normalised Hausdorff measure on the Cantor set. For details regarding this measure, see, e.g., \cite{falconer_book1990, cristeatichy2003}
We consider the compact metric space $\mathcal{C}$ endowed with the Euclidean metric and the topology induced by the Euclidian topology of the real line. Then any set of the form $I \cap \mathcal{C}$, where $I \subset [0,1]$ is an interval, is a set of $\mu_{\mathcal{C}}$-continuity.

 In this case the definition of $\pi$-uniform measurability that we mentioned on page 10 becomes $\pi\left(x^{-1}(A)\right)=\mu_{\mathcal{C}}(A)$, for every set A of $\mu_{\mathcal{C}}$-continuity. 
\end{example}


\begin{thebibliography}{TMF9}

\addcontentsline{toc}{section}{\quad\ \  References}
\bibitem [Bi] {Bi}
\sc{C. Binder,} \textsl{\"Uber einen Satz von de Bruijn und Post},
\"Osterr. Akad. Wiss. Math. - Naturw. Kl. S.B. II, 179 (1971), 233--251

\bibitem [B-P]{BP}
\sc{N.\,G. de Bruijn, K.\,A. Post,} \textsl {A remark on uniformly distributed sequences and Riemann integrability} Indag. Math. 30 (1968), 149--150

\bibitem [BUC]{BUC} {{\sc R.\,C. Buck,}} \textsl{The measure theoretic approach to density},
 Amer. J. Math {68}
(1946), 560--580 

\bibitem [C-P]{cristeaprodinger_jmaa} {{\sc L.\,L. Cristea, H. Prodinger,}} \textsl{Moments of distributions related to digital expansions}, J. Math.Anal. Appl. 315 (2006), 606--625

\bibitem[C-T]{cristeatichy2003} {{\sc L.\,L. Cristea, R.\, F. Tichy,}} \textsl{Discrepancies of point sequences on the Sierpi\'nski carpet}, Math. Slovaca, 53 (2003), No. 4, 351--367

\bibitem [D-T]{D-T}
{{\sc M. Drmota, R.\,F. Tichy,}}
\textsl{ Sequences, Discrepancies and Applications, Springer, Berlin Heidelberg},
 Springer, Berlin Heidelberg,
1997

\bibitem [F]{falconer_book1990} {{\sc K.\,J. Falconer,}} \textsl{Fractal geometry. Mathematical Foundations and Applications}, John Wiley \& Sons, Chichester, 1990

\bibitem [H] {H}  \sc{Z. Hedrlin}, \textsl{ On integration in compact metric spaces.} Comment.Math.Univ. Carolinae 2(4) (1961), 17--19


\bibitem [HLA]{HLA}
{{\sc E. Hlawka,}}
\textsl{ Theorie der Gleichverteilung},
 Bibliographisches Institut,
1979


\bibitem [IPT]{IPT}
\sc{M.\,R. Iaco, M. Pasteka, R.\,F. Tichy,} \textsl {Measure density for set decompositions and uniform distribution} Rend. Circ. Math. Palermo 2/2015, 323--339

\bibitem [K-N]{K-N}
{{\sc L. Kuipers, H. Niederreiter,}}
\textsl{ Uniform Distribution of Sequences},
John Wiley and Sons, N.Y. London, Sydney Toronto, 1974

\bibitem [N]{N} {{\sc E.\,V. Novoselov,}} \textsl{Topological theory of polyadic numbers}, Trudy Tbilis.
    Mat. Inst. 27 (1960), 61--69,  in russian

\bibitem [N1]{N1} {{\sc E.\,V. Novoselov,}} \textsl{New method in the probability number theory}, Doklady
    akademii nauk. ser. matem. No. 2, 28 (1964), 307--364,  in russian

\bibitem [OSS1]{okadasekiguchishiota_binomial} {{\sc T. Okada, T. Sekiguchi, Y. Shiota,}} \textsl{Applications of binomial measures to power sums of digital sums}, J. Number theory 52 (1995), 256--266

\bibitem [OSS2]{okadasekiguchishiota_multinomial} {{\sc T. Okada, T. Sekiguchi, Y. Shiota,}} \textsl{A generalization of Hata-Yamaguti's results on the Takagi function II: Multinomial case}, Japan J. Indust. Appl. Math. 13 (1996), 235--463

\bibitem [Pa] {Pa} {{\sc M. Pa\v st\'eka,}} \textsl{On four approaches to density,} VEDA, Bratislava,
    ISBN 978-80-224-1327-5, Peter Lang, Frankfurt am Main, ISBN 978-3-631-64941-1, 2013, 95 pages

\bibitem [Pa1]{Pa1}
{{\sc  M. Pa\v st\'eka,}}
\textsl{ Remarks on one type of uniform distribution}
Unif. Distrib. Theory 2, No. 1 (2007), 79--92



\bibitem[P-S]{PS} {{\sc M. Pa\v st\'eka, T. \v Sal\'at, }} \textsl{Buck's measure density and sets of positive integers containing arithmetic progressions.}, Math. Slovaca 41 (1991), 283--293



\bibitem[P-P]{P-P} {{\sc M. Pa\v st\'eka,  \v S. Porubsk\'y,}} \textsl{On the distribution on the
sequences of integers.}, Math. Slovaca 43 (1993), 521--639


\bibitem[P-T]{P-T} {{\sc M. Pa\v st\'eka,  R.\,F. Tichy,}} \textsl{Distribution problems in
Dedekind domains and  submeasures.}, Annali dell Universita di Ferrara, Sezione 7: Scienze Matematiche 12, 40, 1994,191--206, 
(1996)

\bibitem [S-P]{SP}
{{\sc O. Strauch, \v S. Porubsk\'y,}}
\textsl{  Distribution of
Sequences a Sampler, Peter Lang, SAV, Frankfurt am Main}, Peter Lang, SAV, Frankfurt am Main,
2005

\bibitem [W]{W}
 {{\sc H. Weyl,}}
 \textsl{Uber die Gleichverteilung von Zahlen mod. Eins},
  Math. Ann  77 (1916), 313--352

\bibitem [T-W]{TW} \sc{R.\,F. Tichy, R. Winkler}, \textsl{Uniform distribution preserving mapping},
Acta. Arith. LX 2 (1991), 177--190




\end{thebibliography}
\end{document}